\newcommand{\CC}{\mathbb{C}}
\newcommand{\NN}{\mathbb{N}}
\newcommand{\RR}{\mathbb{R}}
\newcommand{\PP}{\mathbb{P}}
\newcommand{\calC}{\mathcal{C}}
\newcommand{\calP}{\mathcal{P}}
\newcommand{\calQ}{\mathcal{Q}}
\newcommand{\Hes}{\mathcal{H}}
\newcommand{\Id}{\operatorname{Id}}
\newcommand{\pl}[1]{\foreignlanguage{polish}{#1}}
\newcommand{\tr}{\operatorname{tr}}
\newcommand{\Dom}{\operatorname{Dom}}
\newcommand{\lin}{\operatorname{span}}
\newcommand{\ud}{{\: \rm d}}
\newcommand{\supp}{\operatornamewithlimits{supp}}
\newcommand{\Res}{\operatorname{Res}}
\newtheorem{theorem}{Theorem}[section]
\newtheorem{proposition}[theorem]{Proposition}
\newtheorem{lemma}[theorem]{Lemma}
\newtheorem{corollary}[theorem]{Corollary}
\newtheorem*{theorem*}{Theorem}
\theoremstyle{definition}
\newtheorem{problem}[theorem]{Problem}
\numberwithin{equation}{section}
\numberwithin{theorem}{section}
\title{Christoffel functions for multiple orthogonal polynomials}
\date{\today}
\author{Grzegorz \'Swiderski}
\address{
	Grzegorz \'Swiderski \\
	Department of Mathematics \\
	KU Leuven \\
	Celestijnenlaan 200B box 2400 \\
	BE-3001 Leuven \\
	Belgium \&
	University of Wroc\l{}aw \\
	Faculty of Mathematics and Computer Science \\
	pl. Grunwaldzki 2/4 \\
	50-384 Wroc\l{}aw \\
	Poland
}
\email{grzegorz.swiderski@kuleuven.be}
\author{Walter Van Assche}
\address{
	\pl{
	Walter Van Assche \\
	Department of Mathematics \\
	KU Leuven \\
	Celestijnenlaan 200B box 2400 \\
	BE-3001 Leuven \\
	Belgium}
}
\email{walter.vanassche@kuleuven.be}
\subjclass[2020]{33C45; 41A28; 42C05}
\keywords{Multiple orthogonal polynomials, Christoffel--Darboux kernel, zero distribution, Nevai's operators}
\begin{document}
\selectlanguage{english}
\maketitle

\begin{abstract}
We study weak asymptotic behaviour of the Christoffel--Darboux kernel on the main diagonal corresponding to multiple orthogonal polynomials. We show that under some hypotheses the weak limit of $\tfrac{1}{n} K_n(x,x) \ud \mu$ is the same as the limit of the normalized zero counting measure of type II multiple orthogonal polynomials. We also study an extension of Nevai's operators to our context.
\end{abstract}

\section{Introduction} \label{sec:1}
Let $\{p_n \}_{n=0}^\infty$ be the orthonormal polynomials for a positive
measure $\mu$ on the real line,
\[
	\int_{\mathbb{R}} p_n(x)p_m(x) \ud \mu(x) = \delta_{m,n}, 
\]
then the Christoffel--Darboux kernel is given by
\[
	K_n(x,y) = \sum_{k=0}^{n-1} p_k(x)p_k(y), 
\]
and the Christoffel function is
\[
	\lambda_n(x) = \frac{1}{K_n(x,x)}.  
\]
The Christoffel--Darboux kernel and Christoffel function play an important role in the theory of orthogonal polynomials, polynomial least squares approximation, 
the moment problem, approximation of weight functions, and universality in random matrix theory, see, e.g., the long survey of Nevai \cite{Nevai1986}, and the papers of M\'at\'e-Nevai-Totik \cite{Mate1991}, Van Assche \cite{VanAssche1993}, Inglese \cite{Inglese1995}, Totik \cite{Totik2000, Totik2016}, Simon \cite{Simon2008}, and Lubinsky \cite{Lubinsky2009, Lubinsky2011}. 
The Christoffel--Darboux kernel can be expressed in terms of the polynomials $p_n$ and $p_{n-1}$ through the Christoffel--Darboux formula
\[    
	K_n(x,y) = a_n \frac{p_n(x)p_{n-1}(y)-p_{n-1}(x)p_n(y)}{x-y}, 
\]
where $a_n$ is one of the coefficients of the three-term recurrence relation for the orthonormal polynomials. The Christoffel function
is a positive function on the real line and satisfies an extremum problem
\[
	\lambda_n(x) = 
	\min_{\substack{p \in \mathbb{P}_{n-1}\\p(x)=1}} 
	\int_{\mathbb{R}} |p(y)|^2 \ud \mu(y),
\]
where $\PP_n \subset \RR[x]$ is the space of real polynomials with degree less than or equal to $n$.
Furthermore it is clear that the Christoffel--Darboux kernel $K_n(x,y)$ is symmetric in the two variables $x,y$.
   
In this paper we will consider the Christoffel--Darboux kernel for multiple orthogonal polynomials. In this case the symmetry is lost, there is no obvious
extremum problem, and the positivity of the Christoffel function is not immediately visible since it is no longer a sum of squares. Nevertheless we
will be able to give some results about the weak convergence of the Christoffel--Darboux kernel.

Let $r \geq 1$ and let $\vec{\mu} = (\mu_1,\ldots,\mu_r)$ be a vector of positive measures on the real line having all moments finite. 
By $\NN$ we denote the set of positive integers and $\NN_0 = \NN \cup \{0\}$.
Let $\vec{n} \in \NN_0^r$ be a multi-index of size $|\vec{n}| = n_1+\ldots+n_r$. The monic polynomial $P_{\vec{n}} \in \RR[x]$ is called the \emph{type II multiple orthogonal polynomial} if it of degree $|\vec{n}|$ and it satisfies the following simultaneous orthogonality
\begin{equation} \label{eq:46}
   \int_\RR x^k P_{\vec{n}}(x) \ud \mu_j(x) = 0, \qquad  
   0 \leq k \leq n_j-1; 1 \leq j \leq r. 
\end{equation}
The existence of $P_{\vec{n}}$ is not automatic but it holds under some additional conditions imposed on the moments of the measures. If for any $\vec{n} \in \NN_0^r$ the polynomial $P_{\vec{n}}$ exists, then the vector $\vec{\mu}$ is called \emph{perfect}. The class of perfect systems contains: Angelesco, Nikishin or more generally AT systems (see \cite[Chapter 23]{Ismail2009} for more details). In this article we shall assume that $\vec{\mu}$ is perfect.

Multiple orthogonal polynomials have applications in such fields as approximation theory (Hermite-P\'ade approximation \cite{VanAssche2006}, construction of quadratures \cite{Coussement2005, LubWVA, WVAVuer}), number theory (proving irrationality of numbers \cite{VanAssche1999}), random matrix theory (models with external source \cite{Bleher2004} and products of random matrices \cite{KuijlZhang, KieKuijlStiv}) and more general determinantal point processes (see, e.g., \cite{Kuijlaars2010}).

In the applications to determinantal point processes one is interested in the asymptotic behaviour of the corresponding Christoffel--Darboux kernel which is the main object of study in the present paper. In order to define it we need some definitions. First of all, we need a dual concept to \eqref{eq:46}. A vector  $A_{\vec{n}} = (A_{\vec{n},1},\ldots,A_{\vec{n},r})$ contains \emph{type I multiple orthogonal polynomials} if each $A_{\vec{n},j} \in \RR[x]$ is a polynomial of degree $\leq n_j-1$ and
\[
    \sum_{j=1}^r \int_\RR x^k A_{\vec{n},j}(x) \ud \mu_j(x) = 0, \qquad 0 \leq k \leq |\vec{n}|-2, 
\]
with the normalization
\[
    \sum_{j=1}^r \int_\RR x^{|\vec{n}|-1} A_{\vec{n},j}(x) \ud \mu_j(x) = 1.  
\]
It is a basic result that $A_{\vec{n}}$ exists if and only if $P_{\vec{n}}$ exists.
Next, without loss of generality we can assume that the measures $\mu_1,\ldots,\mu_r$ are absolutely continuous with respect to a measure $\mu$ (e.g., one can take $\mu = \mu_1+\mu_2 + \cdots +\mu_r$). Let $w_j$ be the Radon-Nikodym derivative of $\mu_j$ with respect to $\mu$. Then one defines the function
\[
	Q_{\vec{n}}(x) = \sum_{j=1}^r A_{\vec{n},j}(x) w_j(x).
\]
Further, let us fix a sequence of multi-indices from $\NN_0^r$ such that for any $\ell \in \NN_0$
\begin{equation} \label{eq:24}
	|\vec{n}_\ell| = \ell \quad \text{and} \quad 
	\vec{n}_{\ell+1} = \vec{n}_\ell + \vec{e}_{i_\ell}
\end{equation}
for some $i_\ell \in \{1,\ldots,r\}$, where $\vec{e}_j \in \NN_0^r$ is equal to $1$ on the $j$th position and $0$ elsewhere. Next, let us define
\begin{equation} \label{eq:25}
	p_{\ell} := P_{\vec{n}_{\ell}}, \quad 
	q_{\ell} := Q_{\vec{n}_{\ell+1}}.
\end{equation}
Then the sequences $(p_{\ell} : \ell \in \NN_0), (q_{\ell} : \ell \in \NN_0)$ are biorthogonal in $L^2(\mu)$, \cite[\S 23.1.3]{Ismail2009}, i.e.
\begin{equation} \label{eq:1}
	\int_\RR p_{\ell}(x) {q_{\ell'}(x)} \ud \mu(x) = 
	\begin{cases}
		1 & \ell = \ell' \\
		0 & \text{otherwise}.
	\end{cases}
\end{equation}
Finally, one defines the \emph{Christoffel--Darboux kernel} by the formula
\begin{equation} \label{eq:47}
	K_n(x,y) = \sum_{j=0}^{n-1} p_j(x) {q_j(y)}. 
\end{equation}

Observe that the kernel $K_n(x,y)$ is usually non-symmetric, i.e., $K_n(x,y) \neq K_n(y,x)$, unless $r=1$ and $\mu:=\mu_1$. This kernel has been studied extensively in the case $r=1$ (and $\mu=\mu_1$) for compactly supported measures $\mu$, see, e.g., the survey \cite{Nevai1986} or \cite{Simon2008} for details. In particular, for any $n \in \NN$ and $x \in \supp(\mu)$ one has $K_n(x,x) \geq 1$, and
\begin{equation} \label{eq:52}
	\lim_{n \to \infty} K_n(x,x) = \frac{1}{\mu(\{x\})},
\end{equation}
and for any $f \in \calC_b(\RR)$ (continuous bounded function on the real line)
\begin{equation} \label{eq:48}
	\lim_{n \to \infty} \bigg| \int_\RR f \ud \nu_n - \int_\RR f \ud \eta_n \bigg| = 0,
\end{equation}
where
\begin{equation} \label{eq:49}
	\nu_n = \frac{1}{n} \sum_{x \in p_n^{-1} (\{0\})} \delta_x \quad \text{and} \quad
	\ud \eta_n(x) = \frac{1}{n} K_n(x,x) \ud \mu(x).
\end{equation}
The measure $\nu_n$ is the normalized zero counting measure of $p_n$. The relation \eqref{eq:48} tells us that the weak accumulation points of the sequences $(\nu_n : n \in \NN)$ and $(\eta_n : n \in \NN)$ (by weak compactness they exist) are the same. This property is very important because the behaviour of the sequence $(\nu_n : n \in \NN)$ is rather well-understood in terms of logarithmic potential theory (see, e.g., \cite{Simon2007}). Let us mention that in the applications to random matrix theory one is usually interested in stronger pointwise limits
\[
	\lim_{n \to \infty} \frac{1}{n^\gamma} K_n \bigg(x + \frac{a}{n^\gamma}, x + \frac{b}{n^\gamma} \bigg)
\]
for $a,b \in \RR$ and some $\gamma > 0$, see e.g. \cite{Totik2009, Lubinsky2016}, which are out of the scope of the present article.

The idea of the proof of \eqref{eq:48} presented in \cite{Simon2009} is to show that \eqref{eq:48} holds for any $f \in \RR[x]$ by expressing both integrals in terms of the corresponding Jacobi matrix. A crucial feature is that in this setup the Jacobi matrix is a bounded operator on $\ell^2$. Since $(\nu_n : n \in \NN)$ and $(\eta_n : n \in \NN)$ are sequences of probability measures, then \eqref{eq:48} holds for any $f \in \calC_b(\RR)$ by a density argument.

In Theorem~\ref{thm:A} below we adapt this approach to $r > 1$ to obtain that under some hypotheses \eqref{eq:48} holds true for any $f \in \RR[x]$. To do so, we need to define a generalisation of the Jacobi matrix to our setup. More precisely, since the sequence $(p_\ell : \ell \geq 0)$ is an algebraic basis for $\RR[x]$ there are real constants $J_{\ell,k}$ such that
\begin{equation} \label{eq:53}
	x p_\ell = \sum_{k=0}^\infty J_{\ell,k} p_k.
\end{equation}
We collect these constants into a matrix $J = [J_{\ell,k}]_{\ell,k=0,1,\ldots}$, which is lower Hessenberg, i.e. $J_{\ell,k} = 0$ for $k-\ell > 1$. Let us remark that for a specific choice of the sequence $(\vec{n}_\ell: \ell \geq 0)$, namely the so-called stepline multi-indices, the matrix $J$ is both banded and bounded (see \cite{Aptekarev2006}). This is not true anymore in our generality hence it makes our analysis more complicated. The idea of using $J$ in the general setup comes from \cite{Duits2021}.
Hardy \cite{Hardy2015,Hardy2018} proved results comparing the distribution of the zeros of $p_n$ with the
distribution of random points of special determinantal point processes known as polynomial ensembles. He used very similar techniques which he calls tracial representations (see Lemma 2.1 and Corollary 2.4 in \cite{Hardy2015}) and his results about the average empirical
distribution of the point process \cite[Thm. 3.1]{Hardy2018} corresponds to \eqref{eq:48} when $f$ is a polynomial.  

\begin{theorem} \label{thm:A}
Let $(\vec{n}_\ell : \ell \geq 0)$ be a sequence satisfying \eqref{eq:24}. Assume that the corresponding matrix $J$ satisfies
\begin{equation} \label{eq:44}
	\sup_{n \geq N} |J_{n,n-N}| < \infty \quad \text{for any } N \geq 0.
\end{equation}
Then for any $f \in \RR[x]$ the formula \eqref{eq:48} holds true.
\end{theorem}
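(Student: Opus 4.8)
The plan is to follow Simon's strategy, reducing both integrals $\int_\RR f\ud\nu_n$ and $\int_\RR f\ud\eta_n$ to traces of polynomials in the Hessenberg matrix $J$ and in its $n\times n$ truncation. Write $\Pi_n$ for the coordinate projection onto the first $n$ basis vectors and set $J_n=\Pi_n J\Pi_n$, the top-left $n\times n$ corner of $J$. By linearity in $f$ it suffices to treat monomials $f(x)=x^d$, so the whole argument comes down to comparing $J^d$ with $(J_n)^d$.

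First I would record the two trace formulas. Iterating \eqref{eq:53} gives $x^d p_\ell=\sum_k (J^d)_{\ell,k}p_k$, hence for any polynomial $f$ one has $f(x)p_\ell=\sum_k (f(J))_{\ell,k}p_k$ (a finite sum, since $J$ is lower Hessenberg). Integrating against $q_\ell$ and using biorthogonality \eqref{eq:1} yields
\[
	\int_\RR f(x)p_\ell(x)q_\ell(x)\ud\mu(x)=(f(J))_{\ell,\ell},
\]
so that $\int_\RR f\ud\eta_n=\tfrac{1}{n}\sum_{\ell=0}^{n-1}(f(J))_{\ell,\ell}$. For the zero side I would use that each $p_\ell$ is monic, which forces the superdiagonal of $J$ to be identically $1$; the Hessenberg recurrence then gives the classical identity $\det(xI-J_n)=p_n(x)$, i.e. the eigenvalues of $J_n$ counted with multiplicity are exactly the zeros of $p_n$. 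Therefore $\int_\RR f\ud\nu_n=\tfrac{1}{n}\tr f(J_n)$, and
\[
	\int_\RR f\ud\nu_n-\int_\RR f\ud\eta_n=\frac{1}{n}\Big(\tr f(J_n)-\sum_{\ell=0}^{n-1}(f(J))_{\ell,\ell}\Big),
\]
which reduces everything to showing that the parenthesized difference stays bounded as $n\to\infty$.

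The heart of the argument — and the step I expect to be the main obstacle — is this bound for $f(x)=x^d$. Expanding both traces as sums over closed walks $\ell=v_0\to v_1\to\cdots\to v_d=\ell$ with weight $\prod_i J_{v_i,v_{i+1}}$, the difference $\tr((J_n)^d)-\sum_{\ell<n}(J^d)_{\ell,\ell}$ is minus the sum over walks starting in $\{0,\ldots,n-1\}$ that visit some index $\ge n$. Since $J$ is lower Hessenberg, each step raises the index by at most $1$, so $v_i\le v_0+i$; a walk of length $d$ can reach level $\ge n$ only if $v_0\ge n-d+1$, leaving at most $d$ admissible starting indices. For a fixed starting index the number of closed walks of length $d$ is bounded by a constant $M_d$ depending only on $d$ (on a closed walk every down-step of magnitude $m$ must be compensated by $m$ unit up-steps, so $m\le d$, and there are only finitely many such step patterns). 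Finally each factor $J_{v_i,v_{i+1}}$ is either the superdiagonal entry $1$ or a subdiagonal entry $J_{k,k-m}$ with $0\le m\le d$, bounded in absolute value by $C_m:=\sup_{k\ge m}|J_{k,k-m}|<\infty$ by hypothesis \eqref{eq:44}; hence each walk weight is at most $B_d^{\,d}$ with $B_d=\max_{0\le m\le d}\max(1,C_m)$.

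Combining these three observations gives $\big|\tr((J_n)^d)-\sum_{\ell<n}(J^d)_{\ell,\ell}\big|\le d\,M_d\,B_d^{\,d}$, a constant independent of $n$. Summing over the monomials of $f$ weighted by their coefficients produces a constant $C_f$ with $\big|\int_\RR f\ud\nu_n-\int_\RR f\ud\eta_n\big|\le C_f/n\to 0$, which is exactly \eqref{eq:48}. The two hypotheses enter in complementary ways: the lower Hessenberg shape confines the discrepancy between truncating-then-powering and powering-then-truncating to an $O(d)$-thick boundary strip, while the diagonalwise boundedness \eqref{eq:44} controls the finitely many walk weights even though $J$ need not be a bounded operator on $\ell^2$.
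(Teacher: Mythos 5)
Your proposal is correct and follows essentially the same route as the paper: both reduce $\int f\,\ud\nu_n$ and $\int f\,\ud\eta_n$ to $\tfrac1n\tr f(J_n)$ and $\tfrac1n\sum_{\ell<n}(f(J))_{\ell,\ell}$ via Proposition~\ref{prop:2} and biorthogonality, observe that the two traces can differ only in the last $O(d)$ diagonal entries because $J$ is lower Hessenberg, and control those entries using \eqref{eq:44}. Your closed-walk count is just an explicit unpacking of the paper's Proposition~\ref{prop:3} (the inductive bound on entries of powers of a near-diagonally bounded Hessenberg matrix), so the arguments coincide in substance.
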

It turns out that the condition \eqref{eq:44} is the right substitute for the boundedness of the Jacobi matrix. In Theorem~\ref{thm:3} we formulate a sufficient condition for \eqref{eq:44} to hold which is satisfied for Angelesco and AT systems. Moreover, in Lemma~\ref{lem:2} we present a sufficient condition for the positivity of $K_n(x,x)$, which allows us to prove that \eqref{eq:48} holds for any $f \in \calC_b(\RR)$. This condition covers compactly supported Angelesco systems and compactly supported AT systems with continuous densities.

Inspired by \cite[Section 6.2]{Nevai1979} we define for every bounded measurable function $f$ and $x \in \supp(\mu)$
\[
	G_n[f](x) = \frac{1}{K_n(x,x)} \int_\RR K_n(x,y) K_n(y,x) f(y) \ud \mu(y) ,
\]
provided that $K_n(x,x) \neq 0$. 
We are interested in those cases when 
\begin{equation} \label{eq:50}
	\lim_{n \to \infty}
	G_n[f](x) = 
	f(x), \quad f \in \calC_b(\RR) .
\end{equation}
In the classical case $r=1$ the condition \eqref{eq:50} has been introduced in \cite{Nevai1979} in order to obtain relative asymptotics of Christoffel functions (see, e.g., \cite[Section 4.5]{Nevai1986}). It has been used also for pointwise convergence of orthogonal expansions with respect to systems of orthonormal polynomials (see \cite[Section 4.12]{Nevai1986}). It has been studied rather extensively in \cite{Breuer2010a} where it was shown that for compactly supported measures $\mu:= \mu_1$ the condition \eqref{eq:50} is equivalent to subexponential growth of the sequence of orthonormal polynomials $(p_n : n \geq 0)$ in $L^2(\mu)$, namely
\begin{equation} \label{eq:51}
	\lim_{n \to \infty} \frac{p_n^2(x)}{\sum_{j=0}^n p_j^2(x)} = 0.
\end{equation}
In \cite{Lubinsky2011}  the problem of convergence of $G_n[f]$ to $f$ in the $L^p(\!\ud x)$ norm was considered. Finally, in \cite{Breuer2014} this concept has been applied to estimation of the variance of linear statistics coming from orthogonal polynomial ensembles.
Our motivation of examining the condition \eqref{eq:50} comes from the desire to understand whether an analogue of \eqref{eq:52} holds true (see Proposition~\ref{prop:6}).

\begin{theorem} \label{thm:B}
Assume that:
\begin{enumerate}[(a)]
\item the matrix $J$ satisfies \eqref{eq:44},
\item $K_n(x,x) > 0$ for large $n$,
\item for any $N \geq 1$ one has
$\begin{aligned}[b]
	\lim_{n \to \infty} \frac{q_{\ell}(x) p_{\ell'}(x)}{K_n(x,x)} = 0
\end{aligned}$, where $\ell' \in [n-N, n)$ and $\ell \in [n, n+N]$. \label{thm:B:d}
\end{enumerate}
Then the convergence \eqref{eq:50} holds for any $f \in \RR[x]$.
\end{theorem}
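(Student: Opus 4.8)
The plan is to reduce to monomials and then compute the bilinear form defining $G_n$ explicitly, using only the biorthogonality \eqref{eq:1} and the Hessenberg recurrence \eqref{eq:53}. Since $f\mapsto G_n[f](x)$ and $f\mapsto f(x)$ are both linear, it suffices to establish \eqref{eq:50} for $f(y)=y^m$ with $m\geq 0$. The first step is to record the exact value of the integral. Expanding both kernels via \eqref{eq:47}, inserting $y^m p_k=\sum_\ell (J^m)_{k,\ell}p_\ell$ (iterate \eqref{eq:53}), and integrating termwise, the biorthogonality \eqref{eq:1} collapses one summation and yields
\[
  \int_\RR K_n(x,y)\, y^m\, K_n(y,x) \ud\mu(y)
  = \sum_{k=0}^{n-1}\sum_{j=0}^{n-1} q_k(x)\,(J^m)_{k,j}\,p_j(x).
\]
For $m=0$ the right-hand side is exactly $K_n(x,x)$, so $G_n[1](x)=1$; this settles the constant term and, together with hypothesis~(b), shows $G_n[y^m]$ is well defined.

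Next I would compare this finite double sum with $x^m K_n(x,x)$. Writing $x^m p_k(x)=\sum_{j\ge0}(J^m)_{k,j}p_j(x)$ and using \eqref{eq:47} gives $x^m K_n(x,x)=\sum_{k=0}^{n-1}\sum_{j\ge0}q_k(x)(J^m)_{k,j}p_j(x)$, so subtracting the display above leaves only the tail $j\ge n$:
\[
  x^m K_n(x,x) - \int_\RR K_n(x,y)\, y^m\, K_n(y,x) \ud\mu(y)
  = \sum_{k=0}^{n-1}\sum_{j\ge n} q_k(x)\,(J^m)_{k,j}\,p_j(x).
\]
The crucial point is that this boundary term contains only boundedly many summands: since $J$ is lower Hessenberg, $(J^m)_{k,j}=0$ for $j>k+m$, so the constraints $k\le n-1$, $j\ge n$, $j\le k+m$ force $k\in[n-m,n)$ and $j\in[n,n+m)$. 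Dividing by $K_n(x,x)$ gives
\[
  x^m - G_n[y^m](x)
  = \frac{1}{K_n(x,x)} \sum_{\substack{n-m\le k<n\\ n\le j\le k+m}}
     q_k(x)\,(J^m)_{k,j}\,p_j(x),
\]
a fixed finite number of boundary terms, each straddling the index $n$.

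The remaining work is to estimate these terms, and here hypotheses~(a) and~(c) enter. The coefficient $(J^m)_{k,j}$ is a sum over length-$m$ paths of products of entries $J_{a,b}$ whose indices all lie in a window of size $O(m)$ around $n$; each such entry is either the superdiagonal value $J_{\ell,\ell+1}=1$ (which holds because $p_\ell$ is monic) or lies on a subdiagonal that is bounded uniformly in position by \eqref{eq:44}. Hence $|(J^m)_{k,j}|\le C_m<\infty$ uniformly in $n$, with $C_m$ depending only on $m$. Consequently each boundary term is dominated by $C_m\,|q_k(x)|\,|p_j(x)|/K_n(x,x)$ with one index in $[n-m,n)$ and the other in $[n,n+m)$; every such normalised product tends to $0$ by hypothesis~(c) (with $N=m$). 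As there are only finitely many terms, the whole expression vanishes in the limit, giving $G_n[y^m](x)\to x^m$ and, by linearity, \eqref{eq:50} for all $f\in\RR[x]$.

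I expect the main obstacle to be precisely the bookkeeping that localises the boundary term: one must combine the lower-Hessenberg band structure (to confine the surviving indices to within distance $m$ of $n$) with the uniform control of the subdiagonals from \eqref{eq:44} (to bound $(J^m)_{k,j}$ independently of $n$). Without \eqref{eq:44} the near-diagonal entries of $J^m$ could grow with $n$, so that the finitely many surviving terms would no longer be negligible even though each ratio $q_k(x)p_j(x)/K_n(x,x)$ individually vanishes; it is this interplay, rather than any single estimate, that is the heart of the argument.
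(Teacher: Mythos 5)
Your proposal is correct and follows essentially the same route as the paper's proof: reduce to monomials, express both $\int K_n(x,y)y^mK_n(y,x)\ud\mu(y)$ and $x^mK_n(x,x)$ as bilinear forms in $J^m$ via biorthogonality (Lemma~\ref{lem:1}), use the lower-Hessenberg structure to confine their difference to the finitely many boundary terms $q_k(x)[J^m]_{k,j}p_j(x)$ with $n-m\le k<n\le j\le k+m$, bound $[J^m]_{k,j}$ near the diagonal uniformly in $n$ (Proposition~\ref{prop:3}), and conclude with hypothesis~(c). The only remark worth making is that, exactly as in the paper's own proof, the surviving products have the $q$-index below $n$ and the $p$-index at or above $n$, so hypothesis~(c) is being invoked with the roles of $\ell$ and $\ell'$ interchanged relative to its literal statement; this is a shared cosmetic issue, not a gap in your argument.
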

Let us observe that \eqref{thm:B:d} is an analogue of \eqref{eq:51}.
Regarding Theorem~\ref{thm:B}, we were not able to extend the convergence \eqref{eq:50} to any $f \in \calC_b(\RR)$. The problem here is that the kernel $K_n(x,y) K_n(y,x)$ might not be of constant sign --- we observed this phenomenon numerically for Jacobi-Pi\~{n}eiro polynomials. 

We prove Theorem~\ref{thm:B} by similar means as for Theorem~\ref{thm:A}. Namely, we were able to rewrite $G_n[f](x)$ in terms of the matrix $J$ (see Lemma~\ref{lem:1}), which seems to be a novel approach to this problem even for $r=1$.

The article is organized as follows. In Section~\ref{sec:4} we define and analyse the matrix $J$ from \eqref{eq:53} and its finite truncations. Section~\ref{sec:5} is devoted to analysing the condition~\eqref{eq:44}, its consequences and sufficient conditions. In Section~\ref{sec:6} we prove Theorem~\ref{thm:A}, its corollaries and we discuss conditions under which we can extend convergence to $\calC_b(\RR)$. In Section~\ref{sec:7} we prove Theorem~\ref{thm:B} and discuss some open problems related to it. Finally, in Section~\ref{sec:8} we discuss applications of Theorem~\ref{thm:A} to classes of multiple orthogonal polynomials when one can describe the limit of $(\nu_n : n \in \NN)$ more explicitly.

\section{The matrix representations of the multiplication operator} \label{sec:4}
%We follow the notation from Section~\ref{sec:1}. 

Consider the multiplication operator $M_x : \Dom(M_x) \to L^2(\mu)$ given by
\[
	(M_x f)(x) := x f(x),
\]
where 
\[
	\Dom(M_x) = \big\{ f \in L^2(\mu) : x \cdot f \in L^2(\mu) \big\}.
\]
We have that $M_x$ is a (possibly unbounded) self-adjoint operator.

Let us define the linear spaces
\[
	\calP_n := \big\{ p \in \RR[x] : \deg(p) \leq n \big\}, \quad n \geq 0.
\]

We would like to examine the matrix representation of $M_x$ on the space $\RR[x]$. Since
$(p_\ell : \ell \in \NN_0)$ is an algebraic basis for the space $\RR[x]$, it is enough 
to examine the action of $M_x$ on each $p_{\ell}$. Because $x p_{\ell} \in \calP_{\ell+1}$
there are constants $\{ J_{\ell, k} \}_{k=0}^{\ell+1}$ such that
\begin{equation} \label{eq:2a}
	M_x p_{\ell} = \sum_{k=0}^{\ell+1} J_{\ell, k} p_{k} = \sum_{k=0}^{\infty} J_{\ell, k} p_{k},
\end{equation}
where we have defined $J_{\ell, k} = 0$ for $k > \ell+1$.
Let us collect these constants into a matrix $J = [J_{\ell, k}]_{\ell,k=0}^\infty$.
Observe that it is a lower Hessenberg matrix. 

Next, we want to examine the spectrum of finite sections of $J$.
\begin{proposition} \label{prop:2}
For any $n \geq 1$ let
\[
	J_n := [J]_{0 \ldots n-1;0 \ldots n-1}.
\]
Then $\det(x \Id_n - J_n) = p_n(x)$. In particular, $\sigma(J_n) = p_{n}^{-1}[\{0\}]$, i.e., the eigenvalues of $J_n$ are equal to the
zeros of $p_n$.
\end{proposition}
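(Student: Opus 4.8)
The plan is to convert the recurrences \eqref{eq:2a} for $\ell = 0,1,\ldots,n-1$ into a single matrix identity and then extract the determinant by Cramer's rule. Two structural facts drive everything. First, comparing the coefficients of $x^{\ell+1}$ on both sides of \eqref{eq:2a} and using that every $p_k$ is monic of degree $k$ forces the superdiagonal entries $J_{\ell,\ell+1}=1$. Hence $x\Id_n - J_n$ is itself lower Hessenberg, with all of its superdiagonal entries equal to $-1$ and all entries strictly above the superdiagonal equal to $0$. Second, $p_0 \equiv 1$, because $p_0 = P_{\vec 0}$ is monic of degree $0$.

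Next I would assemble the vector identity. Writing $\mathbf{p}(x) = (p_0(x), \ldots, p_{n-1}(x))^\top$, the $\ell$-th component of $J_n \mathbf{p}(x)$ is $\sum_{k=0}^{n-1} J_{\ell,k}\, p_k(x)$. For $\ell \le n-2$ the truncation to $k \le n-1$ keeps all the nonzero terms of \eqref{eq:2a} (since $J_{\ell,k}=0$ for $k>\ell+1$ and $\ell+1\le n-1$), so this equals $x p_\ell(x)$. For $\ell = n-1$ the only term discarded by the truncation is $J_{n-1,n}\,p_n = p_n$, so the last component equals $x p_{n-1}(x) - p_n(x)$. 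Collecting these rows yields
\[
  (x\Id_n - J_n)\,\mathbf{p}(x) = p_n(x)\,\mathbf{u}, \qquad \mathbf{u} = (0,\ldots,0,1)^\top .
\]

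To finish, I would apply Cramer's rule to the first coordinate. Since $p_0(x) \equiv 1$, Cramer's rule gives $1 = N(x)/\det(x\Id_n - J_n)$, where $N(x)$ is the determinant of the matrix obtained from $x\Id_n - J_n$ by replacing its $0$-th column with $p_n(x)\mathbf{u}$. Expanding $N(x)$ along that column leaves a single nonzero entry $p_n(x)$ in row $n-1$, whose complementary minor is the submatrix of $x\Id_n - J_n$ on rows $0,\ldots,n-2$ and columns $1,\ldots,n-1$. By the first structural fact this minor is lower triangular with all diagonal entries $-1$, so its determinant is $(-1)^{n-1}$; together with the cofactor sign $(-1)^{n-1}$ this gives $N(x) = p_n(x)$. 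Hence $\det(x\Id_n - J_n) = p_n(x)$ for every $x$ at which the denominator is nonzero, and since both sides are polynomials they coincide identically. The statement about the spectrum is then immediate: the eigenvalues of $J_n$ are the roots of its characteristic polynomial $p_n$, i.e.\ $\sigma(J_n) = p_n^{-1}[\{0\}]$.

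The only delicate point is the boundary bookkeeping in the second step, namely verifying that restricting the index range to $0 \le k \le n-1$ discards exactly one term, the monic leading contribution $p_n$ in the last equation, and nothing else; this is what produces the clean right-hand side $p_n(x)\mathbf{u}$. The determinant computation and the use of Cramer's rule (with the harmless caveat that it is valid away from the zeros of the characteristic polynomial, after which the polynomial identity extends it everywhere) are then routine.
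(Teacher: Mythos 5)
Your proof is correct, and it takes a genuinely different route from the paper. The paper expands $\det(x\Id_n - J_n)$ along the last row and then repeatedly along the last columns to show that $f_n(x) := \det(x\Id_n - J_n)$ satisfies the \emph{same} recurrence $x f_{n-1} = \sum_{j=0}^{n} J_{n-1,j} f_j$ as the $p_n$ do via \eqref{eq:2a}; since $f_0 = p_0$, induction gives $f_n = p_n$. You instead package the recurrences into the single vector identity $(x\Id_n - J_n)\,\mathbf{p}(x) = p_n(x)\,\mathbf{u}$ and extract the determinant by Cramer's rule applied to the coordinate $p_0 \equiv 1$, using that the complementary minor is lower triangular with diagonal $-1$. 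Both arguments rest on the same two structural facts ($J_{\ell,\ell+1}=1$ from monicity, and the lower Hessenberg shape), and your bookkeeping at the boundary row $\ell = n-1$ is right: truncating to $k \le n-1$ discards exactly the term $J_{n-1,n}p_n = p_n$. What your approach buys is that it avoids the cofactor sign-chasing in the iterated Laplace expansion and the induction on $n$, replacing them with one clean linear-algebra identity; the mild price is the need to invoke Cramer's rule off the zero set of the characteristic polynomial and then extend by polynomial identity (or, equivalently, to note that $x_i \det A = \det A_i$ holds unconditionally by multilinearity), which you handle correctly. The paper's expansion, on the other hand, makes the recurrence \eqref{eq:23} explicit, which is of independent interest.
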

\begin{proof}
The proof is analogous to \cite[Section 2.2]{Coussement2005}. Namely, observe that
\[
	x \Id_n - J_n =
	\begin{pmatrix}
		x-J_{0,0} & -1 & 0   & 0  & \ldots & 0 &  0   \\
		-J_{1,0} & x-J_{1,1} & -1 & 0 & \ldots & 0 & 0   \\
		-J_{2,0} & -J_{2,1} & x-J_{2,2} & -1 & \ldots &0 & 0   \\
		 \ldots & \ldots & \ldots & \ldots & \ldots & \ldots & \ldots\\
		-J_{n-2,0} & -J_{n-2,1} & -J_{n-2,2}  & \ldots & \ldots & x-J_{n-2,n-2} & -1 \\
		-J_{n-1,0} & -J_{n-1,1} & -J_{n-1,2} & \ldots & \ldots &-J_{n-1,n-2} & x-J_{n-1,n-1} \\
	\end{pmatrix}.
\]
Set
\[
	f_n(x) = \det(x \Id_n - J_n), \quad n \geq 0,
\]
where the determinant of an empty matrix is defined to be $1$. Then by expanding this determinant with respect to the last row and repeatedly expanding the resulting determinants with respect to the last columns we get
\[
	f_n(x) = (x-J_{n-1,n-1})f_{n-1}(x) -\sum_{j=1}^{n-1} (-1)^{n+j} J_{n-1,j-1} (-1)^{n-j} f_{j-1}(x),
\]
which results in
\begin{equation} \label{eq:23}
	x f_{n-1}(x) = \sum_{j=0}^{n} J_{n-1, j} f_j(x).
\end{equation}
Observe that by \eqref{eq:23} and \eqref{eq:2a} $f_n(x)$ and $p_n(x)$ satisfy the same recurrence relation. Since $f_0(x) = p_0(x)$ we obtain that $f_n(x) = p_n(x)$.
\end{proof}

Analogously, one can try to consider the matrix representation of $M_x$ on the space $\calQ$, where
\[
	\calQ_n := \lin \{ q_{k} : k = 0, 1, \ldots, n \} \quad \text{and} \quad
	\calQ := \bigcup_{k=0}^\infty \calQ_k.
\]
The following proposition provides sufficient conditions under which it is possible.
\begin{proposition} \label{prop:1}
Let $(\vec{n}_\ell : \ell \geq 0)$ be a path satisfying \eqref{eq:24}. Let $(p_\ell : \ell \geq 0)$ and $(q_\ell : \ell \geq 0)$ be defined as in \eqref{eq:25}. If
\begin{equation} \label{eq:11}
	\lim_{\ell \to \infty} (\vec{n}_\ell)_j = \infty, \quad j=1, 2, \ldots r,
\end{equation}
then the operator $M_x$ is well-defined on $\calQ$. Moreover, for any $\ell \geq 0$ we have
\[
	M_x q_\ell = \sum_{k=0}^\infty J_{k, \ell} q_k,
\]
where $J$ is defined in \eqref{eq:2a}, and the above sum contains a finite number of non-zero elements.
\end{proposition}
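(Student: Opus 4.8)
The plan is to first identify the linear span $\calQ_L$ explicitly, then use assumption \eqref{eq:11} to show that multiplication by $x$ keeps $\calQ$ inside itself, and finally read off the coefficients from biorthogonality. For a multi-index $\vec{m} \in \NN_0^r$ I introduce the space of type I functions
\[
	\calR_{\vec{m}} := \Big\{ \textstyle\sum_{j=1}^r B_j(x) w_j(x) : B_j \in \calP_{m_j-1} \Big\},
\]
so that by construction $q_k = Q_{\vec{n}_{k+1}} \in \calR_{\vec{n}_{k+1}}$. Since the path \eqref{eq:24} is nondecreasing in each coordinate, $\calR_{\vec{n}_{k+1}} \subseteq \calR_{\vec{n}_{L+1}}$ for every $k \leq L$, whence $\calQ_L \subseteq \calR_{\vec{n}_{L+1}}$.

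The first key step is the identity $\calQ_L = \calR_{\vec{n}_{L+1}}$, which I would obtain by a dimension count. On one hand, biorthogonality \eqref{eq:1} forces $q_0, \ldots, q_L$ to be linearly independent, so $\dim \calQ_L = L+1$. On the other hand, $\calR_{\vec{n}_{L+1}}$ is spanned by the $|\vec{n}_{L+1}| = L+1$ functions $x^i w_j$ (with $0 \leq i \leq (\vec{n}_{L+1})_j - 1$), so $\dim \calR_{\vec{n}_{L+1}} \leq L+1$. Combined with the inclusion $\calQ_L \subseteq \calR_{\vec{n}_{L+1}}$ this forces equality; note that linear independence of the $x^i w_j$ comes out as a by-product, so no separate hypothesis is required here.

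Now fix $\ell$. Writing $q_\ell = \sum_j A_{\vec{n}_{\ell+1},j} w_j$ we have $x q_\ell = \sum_j (x A_{\vec{n}_{\ell+1},j}) w_j$ with $\deg(x A_{\vec{n}_{\ell+1},j}) \leq (\vec{n}_{\ell+1})_j$. By \eqref{eq:11} each coordinate of $\vec{n}_\ell$ tends to infinity, so I may pick $L$ with $(\vec{n}_{L+1})_j \geq (\vec{n}_{\ell+1})_j + 1$ for all $j$; for such $L$ we get $x q_\ell \in \calR_{\vec{n}_{L+1}} = \calQ_L$. In particular $x q_\ell$ is a finite combination of $q_0, \ldots, q_L$, which shows both that $\calQ \subseteq \Dom(M_x)$ (recall $\calQ \subseteq L^2(\mu)$) and that $M_x$ maps $\calQ$ into itself. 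This is the heart of the matter, and \eqref{eq:11} is exactly the condition that prevents the degree increase caused by multiplication by $x$ from pushing $x q_\ell$ out of $\calQ$; I expect this to be the main obstacle, the rest being routine.

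It remains to identify the coefficients. Writing $x q_\ell = \sum_{k=0}^L c_k q_k$ and pairing against $p_k$ gives, by biorthogonality \eqref{eq:1}, $c_k = \int_\RR p_k (x q_\ell) \ud\mu$. Using that the pairing $\int_\RR P Q \ud\mu = \sum_j \int_\RR P B_j \ud\mu_j$ is well-defined for any polynomial $P$ by finiteness of the moments, and is symmetric under multiplication by $x$, I rewrite $c_k = \int_\RR (x p_k) q_\ell \ud\mu$; expanding $x p_k$ via \eqref{eq:2a} and invoking biorthogonality once more yields $c_k = \sum_m J_{k,m} \int_\RR p_m q_\ell \ud\mu = J_{k,\ell}$. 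Hence $M_x q_\ell = \sum_{k=0}^L J_{k,\ell} q_k = \sum_{k=0}^\infty J_{k,\ell} q_k$, a sum with finitely many nonzero terms, as claimed.
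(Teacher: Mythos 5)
Your proof is correct and follows essentially the same route as the paper: show via \eqref{eq:11} that multiplication by $x$ does not push $q_\ell$ out of $\calQ$ (because $x q_\ell$ lands in a type I space realized as some $\calQ_L$), then read off the coefficients $J_{k,\ell}$ from biorthogonality by moving the factor $x$ onto $p_k$ and expanding with \eqref{eq:2a}. The only cosmetic difference is that you establish the key identity $\calQ_L = \calR_{\vec{n}_{L+1}}$ by a self-contained dimension count using linear independence of the $q_k$, where the paper instead cites \cite[Corollary 23.1.1]{Ismail2009} and induction for the corresponding span identity, and you phrase the final step as symmetry of the bilinear pairing rather than self-adjointness of $M_x$; both are sound.
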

\begin{proof}
First of all, by \cite[Corollary 23.1.1]{Ismail2009} and induction we can derive that for any $\ell \geq 0$
\begin{equation} \label{eq:12}
	\lin \big\{ A_{\vec{n}_{k+1}} : 0 \leq k \leq \ell \big\} = 
	\bigoplus_{j=1}^r \calP_{(\vec{n}_N)_j}.
\end{equation}
In particular,
\[
	A_{\vec{n}_{\ell+1}} \in \bigoplus_{j=1}^r \calP_{(\vec{n}_\ell)_j},
\]
and consequently,
\begin{equation} \label{eq:13}
	x A_{\vec{n}_{\ell+1}} \in \bigoplus_{j=1}^r x \calP_{(\vec{n}_\ell)_j}
	\subseteq
	\bigoplus_{j=1}^r \calP_{(\vec{n}_\ell)_j+1}.
\end{equation}
Now, by \eqref{eq:11} there exists \emph{minimal} $N_\ell \geq \ell+r$ such that
\[
	(\vec{n}_{N_\ell})_j \geq (\vec{n}_\ell)_j + 1, \quad j=1,2,\ldots,r.
\]
Thus, by \eqref{eq:12} and \eqref{eq:13} we get
\[
	x A_{\vec{n}_\ell+1} \in \lin \big\{ A_{\vec{n}_{k+1}} : 0 \leq k \leq N_\ell \big\},
\]
which immediately implies $x q_\ell \in \calQ_{N_\ell}$. Thus, there are constants $\{ S_{\ell,k} \}_{k=0}^{N_\ell}$ such that
\begin{equation} \label{eq:2b}
	M_x q_\ell = \sum_{k=0}^{N_\ell} S_{\ell,k} q_k = 
	\sum_{k=0}^\infty S_{\ell,k} q_k,
\end{equation}
where we have defined $S_{\ell,k} = 0$ for $k > N_\ell$. Let us collect these constants into a matrix $S=[S_{\ell,k}]_{\ell,k=0}^\infty$. It remains to prove that $S = J^t$.
Let us take the scalar product on the both sides of \eqref{eq:2a} with $q_{\ell'}$. Then
\[
	\langle M_x p_{\ell}, q_{\ell'} \rangle_{L^2(\mu)} = 
	\sum_{k=0}^{\infty} J_{\ell, k} \langle p_{k}, q_{\ell'} \rangle_{L^2(\mu)} =
	J_{\ell, \ell'},
\]
where the last equality follows from \eqref{eq:1}. Next, by self-adjointness of $M_x$, we have
\[
	\langle M_x p_{\ell}, q_{\ell'} \rangle_{L^2(\mu)} =
	\langle p_{\ell}, M_x q_{\ell'} \rangle_{L^2(\mu)} =
	{\langle M_x q_{\ell'}, p_{\ell} \rangle}_{L^2(\mu)}.
\]
Hence, by an analogous reasoning applied to \eqref{eq:2b}, we obtain
\[
	\langle M_x q_{\ell'}, p_\ell \rangle_{L^2(\mu)} =
	\sum_{k=0}^\infty S_{\ell',k} \langle q_k, p_{\ell} \rangle_{L^2(\mu)} =
	S_{\ell', \ell}
\]
Thus, $S = {J^t}$ and the proof is complete.
\end{proof}

\section{Near diagonal boundedness of $J$} \label{sec:5}
Let us denote by $\Hes$ the set of lower Hessenberg matrices. Specifically, 
\[
	X \in \Hes \quad \Leftrightarrow \quad X_{i,j} = 0 \ \text{for any } j > i+1.
\]
In particular, we have $J \in \Hes$. Let us define the neighbourhood of the diagonal in $\NN_0^2$ of radius 
$R \geq 0$ by
\[
	D_R = \big\{ (i,j) \in \NN_0^2 : | i-j | \leq R \big\}.
\]
In what follows, we will need conditions implying that
\begin{equation} \label{eq:NDB}
	\tag{NDB}
	\sup_{(i,j) \in D_R} |X_{i,j}| < \infty \quad \text{for any } R \geq 0.
\end{equation}
The following proposition implies that if $X \in \Hes$ satisfies \eqref{eq:NDB}, then every power of $X$ has this property.
\begin{proposition} \label{prop:3}
Let $X \in \Hes$. Then for any $\ell \geq 1$ and $M \geq 0$ there are constants $c(M,\ell) \geq 1$ and $R(M,\ell) \geq M$ \emph{independent} of $X$ such that
\begin{equation} \label{eq:8}
	\sup_{(i,j) \in D_M} | [X^\ell]_{i,j} | \leq 
	c(M, \ell) \Big( \sup_{(i,j) \in D_{R(M, \ell)}} |X_{i,j}| \Big)^\ell.
\end{equation}
\end{proposition}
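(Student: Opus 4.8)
The plan is to expand the matrix power entrywise and exploit the single Hessenberg constraint to localise the summation. Writing
\[
	[X^\ell]_{i,j} = \sum_{k_1,\ldots,k_{\ell-1}} X_{i,k_1} X_{k_1,k_2} \cdots X_{k_{\ell-1},j},
\]
and setting $k_0 := i$ and $k_\ell := j$, a term is nonzero only if every factor $X_{k_m,k_{m+1}}$ is nonzero, which by $X \in \Hes$ forces $k_{m+1} \le k_m + 1$ for all $0 \le m \le \ell-1$. First I would record the two elementary consequences of this one-step bound, obtained by iterating it upward from $k_0=i$ and downward from $k_\ell=j$: namely $k_m \le i + m$ and $k_m \ge j - (\ell - m)$ for every $m$. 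Together with $k_m \in \NN_0$ these confine each intermediate index to an explicit interval.

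Next I would use these bounds to control the off-diagonal distance of each factor when $(i,j) \in D_M$. For a nonzero term one has $k_{m+1} - k_m \le 1$ directly from the Hessenberg property, while the opposite estimate $k_m - k_{m+1} \le (i+m) - (j - \ell + m + 1) = (i-j) + \ell - 1 \le M + \ell - 1$ follows by combining the upper bound on $k_m$ with the lower bound on $k_{m+1}$. Hence $|k_m - k_{m+1}| \le M + \ell =: R(M,\ell)$, so that $(k_m,k_{m+1}) \in D_{R(M,\ell)}$ and each factor is bounded by $\sup_{(i,j) \in D_{R(M,\ell)}} |X_{i,j}|$. Note that $R(M,\ell) \ge M$ as required and that it is manifestly independent of $X$; this step is what pins down the correct radius.

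It then remains to count the nonzero terms, which produces the constant $c(M,\ell)$. Each intermediate index $k_m$ with $1 \le m \le \ell-1$ is constrained to the interval $[\max(0, j-(\ell-m)),\, i+m]$, and I would check that its number of integer points is bounded, using $|i-j| \le M$, by a quantity depending only on $M$ and $\ell$ (for instance $M + 2\ell$). Multiplying these ranges gives at most $c(M,\ell) := (M+2\ell)^{\ell-1} \ge 1$ nonzero tuples, and the triangle inequality then yields \eqref{eq:8}. The main bookkeeping obstacle is precisely this counting step: one must verify that the admissible range of every intermediate index is bounded uniformly in $i$ and $j$ separately (not merely in their difference $i-j$), which is where the upper bound $k_m \le i+m$ and the nonnegativity $k_m \ge 0$ must be used simultaneously to cover the regime $j < \ell$; everything else reduces to iterating the single-step Hessenberg inequality.
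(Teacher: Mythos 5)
Your proof is correct. It rests on the same two observations as the paper's: the Hessenberg condition forces each summation index in the expansion of $[X^\ell]_{i,j}$ to increase by at most one per step, and combining this with the endpoint constraints $k_0=i$, $k_\ell=j$, $|i-j|\le M$ confines every intermediate index (and every factor's off-diagonal distance) to a band depending only on $M$ and $\ell$. The difference is purely in the packaging: the paper peels off one factor at a time and runs an induction on $\ell$, writing $[X^\ell]_{i,j}=\sum_{k=j-1}^{i+\ell-1}[X^{\ell-1}]_{i,k}X_{k,j}$ and defining $c(M,\ell)$ and $R(M,\ell)$ by the recursions $c(M,\ell)=(M+\ell+1)\,c(M+\ell,\ell-1)$ and $R(M,\ell)=\max(R(M+\ell,\ell-1),M+\ell)$, whereas you unroll the full path expansion at once and obtain the closed-form constants $R(M,\ell)=M+\ell$ and $c(M,\ell)=(M+2\ell)^{\ell-1}$. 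Your version has the minor advantage of explicit constants and of making visible exactly where the nonnegativity $k_m\ge 0$ is needed to handle small $j$; the paper's induction is shorter to write because the counting of admissible indices is done one layer at a time. Both yield the stated estimate, and your checks (that $R(M,\ell)\ge M$, that the interval $[\max(0,j-(\ell-m)),\,i+m]$ has at most $M+\ell+1\le M+2\ell$ integer points in both regimes $j\ge\ell-m$ and $j<\ell-m$) are accurate.
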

\begin{proof}
First observe that by induction one can show that $[X^\ell]_{i,j} = 0$ provided $j > i+\ell$.
Thus for any $\ell \geq 2$
\begin{equation} \label{eq:9}
	[X^\ell]_{i,j} = 
	\sum_{k=0}^\infty [X^{\ell-1}]_{i,k} X_{k,j} =
	\sum_{k=j-1}^{i+\ell-1} [X^{\ell-1}]_{i,k} X_{k,j}.
\end{equation}

We shall prove \eqref{eq:8} inductively. For $\ell=1$, the statement holds true for $c(M, \ell) = 1$
and $R(M, \ell) = M$.
Suppose that $\ell \geq 2$. Then by \eqref{eq:9} and for any $(i,j) \in D_M$ we have
\[
	| [X^\ell]_{i,j} | \leq (M+\ell+1) \cdot
	\sup_{(i,k) \in D_{M+\ell}} | [X^{\ell-1}]_{i,k} |  \cdot
	\sup_{(k,j) \in D_{M+\ell}} |X_{k,j}|.
\]
By the induction hypothesis
\[
	| [X^\ell]_{i,j} | \leq (M+\ell+1) \cdot c(M+\ell, \ell-1)
	\Big( \sup_{(i,k) \in D_{R(M+\ell, \ell-1)}} | X_{i,k} | \Big)^{\ell-1} \cdot
	\sup_{(k,j) \in D_{M+\ell}} |X_{k,j}|.
\]
Hence by defining
\[
	c(M, \ell) := (M+\ell+1) \cdot c(M+\ell, \ell-1) \quad \text{and} \quad
	R(M, \ell) := \max \big( R(M+\ell, \ell-1), M+\ell \big)
\]
we obtain
\begin{equation} \label{eq:10}
	| [X^\ell]_{i,j} | \leq c(M, \ell)
	\Big( \sup_{(i,k) \in D_{R(M, \ell)}} | X_{i,k} | \Big)^{\ell} .
\end{equation}
The conclusion follows by taking the supremum over all $(i,j) \in D_M$
on the left-hand side of \eqref{eq:10}. 
\end{proof}

\subsection{Criteria for the boundedness}
The first sufficient condition implying that $J$ satisfies \eqref{eq:NDB} is formulated in terms of the nearest neighbor recurrence relations which multiple orthogonal polynomials satisfy. Let us recall that according to \cite{VanAssche2011} we have for the type II polynomials
\begin{equation}   \label{NNRR-P}
    xP_{\vec{n}}(x) = 
    P_{\vec{n}+\vec{e}_k}(x) + 
    b_{\vec{n},k} P_{\vec{n}}(x) + 
    \sum_{j=1}^r a_{\vec{n},j} P_{\vec{n}-\vec{e}_j}(x), \qquad 1 \leq k \leq r
\end{equation}
for some real sequences $a_{\vec{n},j}$ and $b_{\vec{n},k}$.

More precisely, we have the following
\begin{proposition}  \label{prop:4}
Let $(\vec{n}_{\ell} : \ell \geq 0)$ be a path satisfying \eqref{eq:24}.
Suppose that the nearest neighbour recurrence coefficients for MOPs satisfy
\begin{align}
	\label{eq:3a}
	&\max_{1 \leq i \leq r} \sup_{\ell \geq 0} |a_{\vec{n}_\ell, i}| < \infty, \\
	\label{eq:3b}
	&\max_{1 \leq i \leq r} \sup_{\vec{n} \in \NN_0^r} |b_{\vec{n},i}| < \infty.
\end{align}
Then the matrix $J$ associated with the sequence $(\vec{n}_{\ell} : \ell \geq 0)$ satisfies \eqref{eq:NDB}.
\end{proposition}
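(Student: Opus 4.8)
The plan is to read off the near-diagonal entries of $J$ directly from the nearest neighbour recurrence \eqref{NNRR-P}, the only real complication being that the lowering terms $P_{\vec{n}_\ell - \vec{e}_j}$ are in general \emph{not} members of the chosen path $(p_k)$ and so must be re-expanded in that basis. First I would apply \eqref{NNRR-P} at $\vec{n} = \vec{n}_\ell$ with the distinguished index $k = i_\ell$ from \eqref{eq:24}, so that $P_{\vec{n}_\ell + \vec{e}_{i_\ell}} = p_{\ell+1}$, giving
\[
	x p_\ell = p_{\ell+1} + b_{\vec{n}_\ell, i_\ell}\, p_\ell + \sum_{j=1}^r a_{\vec{n}_\ell, j}\, P_{\vec{n}_\ell - \vec{e}_j}.
\]
Since each $P_{\vec{n}_\ell - \vec{e}_j}$ has degree $\le \ell-1$, comparison with \eqref{eq:2a} immediately yields $J_{\ell,\ell+1} = 1$ and $J_{\ell,\ell} = b_{\vec{n}_\ell, i_\ell}$, both controlled by \eqref{eq:3b}. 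For $k \le \ell-1$ the entry $J_{\ell,k}$ equals $\sum_j a_{\vec{n}_\ell,j}$ times the coefficient of $p_k$ in the path-expansion of $P_{\vec{n}_\ell - \vec{e}_j}$, so by \eqref{eq:3a} it remains only to control those expansion coefficients near the top degree.

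The engine of the argument is the identity obtained by subtracting \eqref{NNRR-P} written for two indices $k,k'$, namely $P_{\vec{n}+\vec{e}_k} - P_{\vec{n}+\vec{e}_{k'}} = (b_{\vec{n},k'} - b_{\vec{n},k})\,P_{\vec{n}}$. Applying it with $\vec{n} = \vec{n}_{m-1} - \vec{e}_j$, $k = i_{m-1}$ and $k' = j$ (so that $\vec{n}+\vec{e}_{i_{m-1}} = \vec{n}_m - \vec{e}_j$ and $\vec{n}+\vec{e}_j = \vec{n}_{m-1}$) turns $P_{\vec{n}_m - \vec{e}_j}$ into a path element plus a bounded multiple of the next lower off-path polynomial:
\[
	P_{\vec{n}_m - \vec{e}_j} = p_{m-1} + c_m^{(j)}\, P_{\vec{n}_{m-1} - \vec{e}_j}, \qquad
	\big| c_m^{(j)} \big| \le 2 \max_{1\le i\le r}\sup_{\vec{n}\in\NN_0^r} |b_{\vec{n},i}| =: 2B.
\]
The constant is again bounded by \eqref{eq:3b}; note it is evaluated \emph{off} the path, which is precisely why \eqref{eq:3b} is required as a supremum over all of $\NN_0^r$, whereas \eqref{eq:3a} need only hold along the path. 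The recursion stops once a component drops to zero: if $(\vec{n}_{m-1})_j = 0$ then necessarily $i_{m-1}=j$ (otherwise $P_{\vec{n}_m-\vec{e}_j}$ does not occur, as $a_{\vec{n}_m,j}=0$), and then $\vec{n}_m - \vec{e}_j = \vec{n}_{m-1}$ is itself a path index, so the term is just $p_{m-1}$.

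Iterating this relation, the coefficient of $p_{\ell-1-s}$ in $P_{\vec{n}_\ell - \vec{e}_j}$ is a product of exactly $s$ of the constants $c_m^{(j)}$, hence bounded by $(2B)^s$. Fixing $R \ge 0$, I would then observe that for $(\ell,k) \in D_R$ with $k \le \ell-1$ one has $s = \ell-1-k \le R-1$, so every expansion coefficient entering the band is bounded by $(2B)^{R-1}$ uniformly in $\ell$. Combining this with the diagonal and superdiagonal bounds gives
\[
	\sup_{(i,j)\in D_R} |J_{i,j}| \le \max\big\{\, 1,\; B,\; r A (2B)^{R-1} \big\} < \infty, \qquad A := \max_{1\le i\le r}\sup_{\ell\ge 0}|a_{\vec{n}_\ell,i}|,
\]
which is exactly \eqref{eq:NDB}. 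The main conceptual obstacle is simply recognizing that the off-path polynomials $P_{\vec{n}_\ell - \vec{e}_j}$ can be resolved into the path basis with coefficients that may grow geometrically in $s$ yet remain products of only boundedly many uniformly bounded factors; the remaining care is the bookkeeping of the recursion at the boundary of $\NN_0^r$, which is harmless because a fixed-width diagonal band never sees more than $R$ iterations.
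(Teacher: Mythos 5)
Your proof is correct and follows essentially the same route as the paper: the explicit band formula $J_{\ell,j} = \sum_{k=1}^r a_{\vec{n}_\ell,k} \prod_{m=j+2}^{\ell} \big(b_{\vec{n}_{m-1}-\vec{e}_k,k} - b_{\vec{n}_{m-1}-\vec{e}_k,i_{m-1}}\big)$ that you obtain by iterating the difference of two nearest-neighbour relations is exactly the formula the paper imports from Duits--Kuijlaars (Proposition~2.1 there), and the concluding estimate (at most $R-1$ uniformly bounded factors in the product for $(\ell,j)\in D_R$, with the prefactor controlled by \eqref{eq:3a}) is identical. The only difference is that you rederive that formula from scratch, including the boundary bookkeeping at $\partial\NN_0^r$, rather than citing it.
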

\begin{proof}
In view of the formula \eqref{eq:2a} and \cite[Proposition 2.1]{Duits2021}, the matrix $J$ satisfies
\[
	\begin{gathered}
	J_{\ell,\ell+1} = 1, \qquad  
	J_{\ell, \ell} = b_{\vec{n}_\ell, i_{\ell}} \\
	J_{\ell, j} =
	\sum_{k=1}^r a_{\vec{n}_{\ell},k} 
	\prod_{m=j+2}^\ell
	\big( b_{\vec{n}_{m-1}-\vec{e}_k,k} - b_{\vec{n}_{m-1}-\vec{e}_k,i_{m-1}} \big), \quad 0 \leq j \leq \ell-1.	
	\end{gathered}
\]
Let $R \geq 0$. Then for $(\ell, j) \in D_R$ we have $|j-\ell| \leq R$. Hence the product above has at most $R-1$ terms which, by \eqref{eq:3b}, are uniformly bounded. Hence, the result readily follows from \eqref{eq:3a}.
\end{proof}

The following theorem gives other conditions implying \eqref{eq:NDB}. Conditions implying the hypotheses of this result are contained in \cite{Haneczok2012}.
\begin{theorem} \label{thm:3}
Let $(\vec{n}_{\ell} : \ell \geq 0)$ be a path satisfying \eqref{eq:24} and let $J$ be the corresponding matrix 
defined by \eqref{eq:2a}.
Suppose there is a compact interval $\Delta \subset \RR$ such that for any $n \in \NN_0$:
\begin{enumerate}[(a)]
	\item the polynomial $p_n$ has exactly $n$ simple zeros which lie inside $\Delta$,
	\item the zeros of $p_n$ and $p_{n+1}$ are interlacing.
\end{enumerate}
Then the matrix $J$ satisfies \eqref{eq:NDB}.
\end{theorem}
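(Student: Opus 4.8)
The plan is to turn the two geometric hypotheses into analytic control of the entries of $J$ via its finite sections. By Proposition~\ref{prop:2} the eigenvalues of $J_n$ are exactly the zeros of $p_n$, so the power sums $\tr(J_n^k)$ are sums of $k$-th powers of the zeros, which by hypothesis~(a) all lie in the compact set $\Delta$. The idea is to bound the entries of $J$ one subdiagonal at a time, reading off the $m$-th subdiagonal from the $(m+1)$-st power of $J$, where the consecutive differences $\tr(J_{n+1}^k)-\tr(J_n^k)$ supply the needed uniform estimates.

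First I would show that, for each fixed $k$, the differences $\tr(J_{n+1}^k)-\tr(J_n^k)$ are bounded uniformly in $n$. Writing $x_1<\cdots<x_n$ and $y_1<\cdots<y_{n+1}$ for the zeros of $p_n$ and $p_{n+1}$, hypothesis~(b) gives the interlacing $y_i<x_i<y_{i+1}$, and with $g(t)=t^k$ one has
\[
	\tr(J_{n+1}^k)-\tr(J_n^k) = g(y_1) + \sum_{i=1}^n \big( g(y_{i+1})-g(x_i) \big).
\]
Estimating each term by the Lipschitz constant of $g$ on $\Delta$ and using $y_{i+1}-x_i<y_{i+1}-y_i$, the sum telescopes to at most $(\max_{t\in\Delta} k|t|^{k-1})\,|\Delta|$, so the whole expression is bounded by a constant depending only on $k$ and $\Delta$. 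Compactness of $\Delta$ is exactly what makes this bound independent of $n$.

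Next I would localize this trace difference in terms of the entries of $J$. Expanding $\tr(J_m^k)$ as a weighted sum over closed walks of length $k$ in $\{0,\dots,m-1\}$, the difference $\tr(J_{n+1}^k)-\tr(J_n^k)$ collects precisely the closed walks of length $k$ that pass through the index $n$. Since $J\in\Hes$ with $J_{\ell,\ell+1}=1$, every ascent step raises the index by exactly one and has weight $1$; hence a single descent step using the entry $J_{a,a-s}$ must be compensated by $s$ ascents, and a walk containing a descent of length $s\ge k-1$ is forced to be the unique walk $n\to n-k+1\to n-k+2\to\cdots\to n$, which with its $k$ cyclic shifts contributes $k\,J_{n,n-k+1}$. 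Every other walk through $n$ uses only the diagonal, the unit superdiagonal, and subdiagonals of index at most $k-2$, giving
\[
	\tr(J_{n+1}^k)-\tr(J_n^k) = k\,J_{n,n-k+1} + E_n^{(k)},
\]
where $E_n^{(k)}$ is a sum of a number of such products depending only on $k$. Finally I would induct on the subdiagonal index: $k=1$ gives $J_{n,n}=\tr(J_{n+1})-\tr(J_n)$, bounded by the first step; assuming $\sup_\ell|J_{\ell,\ell-s}|<\infty$ for $0\le s\le m-1$ and applying the identity with $k=m+1$, every factor in $E_n^{(m+1)}$ is then uniformly bounded, so solving for $J_{n,n-m}$ bounds the $m$-th subdiagonal. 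As the upper part of $J$ is trivial, this gives $\sup_{(i,j)\in D_R}|J_{i,j}|<\infty$ for every $R$, i.e.\ \eqref{eq:NDB}.

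The main obstacle is the localization step: isolating the leading term $k\,J_{n,n-k+1}$ with a remainder $E_n^{(k)}$ that involves \emph{only} entries strictly closer to the diagonal. The structural fact that makes this go through is that ascents have weight $1$ and change the index by exactly one, so descending $k-1$ below $n$ and returning within $k$ steps forces the single ``one long descent, then $k-1$ unit ascents'' walk; verifying that no other walk through $n$ can invoke a subdiagonal of index $\ge k-1$ is the crux that keeps the induction self-contained.
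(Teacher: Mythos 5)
Your argument is correct, and it takes a genuinely different route from the paper. The paper also reduces \eqref{eq:NDB} to $\sup_{n\geq N}|J_{n,n-N}|<\infty$ and inducts on $N$, but its mechanism is complex-analytic: interlacing enters through a partial-fraction decomposition of $p_{n+1}/p_n$ whose coefficients are positive and sum to $1$, giving the uniform ratio bound $\sup_n\sup_{z\in K}|p_{n+1}(z)|/|p_n(z)|<\infty$ on compact $K\subset\CC\setminus\Delta$; the entry $J_{n,n-N}$ is then extracted as a residue at infinity of a function built from these ratios and estimated by a contour integral over a circle enclosing $\Delta$. You instead work entirely on the real line: Proposition~\ref{prop:2} turns $\tr(J_n^k)$ into a power sum of zeros, interlacing enters through the telescoping bound $|\tr(J_{n+1}^k)-\tr(J_n^k)|\leq \max_\Delta|t|^k+k\max_\Delta|t|^{k-1}\,|\Delta|$, and the combinatorial step --- that a closed walk of length $k$ through the vertex $n$ using a descent of size $\geq k-1$ must spend its remaining $k-1$ unit-weight ascents climbing back, hence is the single cycle contributing $k\,J_{n,n-k+1}$, while all other walks through $n$ involve only subdiagonals of index $\leq k-2$ in a window of size depending only on $k$ --- correctly isolates the new entry with a remainder controlled by the induction hypothesis. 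I checked the crux (the localization) in the small cases: $\tr(J_{n+1})-\tr(J_n)=J_{n,n}$ and $\tr(J_{n+1}^2)-\tr(J_n^2)=2J_{n,n-1}+J_{n,n}^2$, consistent with your formula. What each approach buys: the paper's proof yields the uniform ratio asymptotics $M_1(K)<\infty$ as a by-product of independent interest, whereas yours is more elementary (no contour integration), gives explicit constants in terms of $|\Delta|$ and $\max_\Delta|t|$, and makes transparent that interlacing is used only through the $O(1)$ variation of power sums of the zero sets. One small point to make explicit when writing it up: the identity with the leading term $k\,J_{n,n-k+1}$ requires $n\geq k-1$ so that the index $n-k+1$ is nonnegative, which is exactly the range needed for $\sup_{n\geq N}|J_{n,n-N}|$, and the superdiagonal is identically $1$ because the $p_n$ are monic, so bounding the subdiagonals indeed suffices for \eqref{eq:NDB}.
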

\begin{proof}
It is enough to prove that 
\begin{equation} \label{eq:6}
	\sup_{n \geq N} |J_{n,n-N}| < \infty \quad \text{for any } N \geq 0.
\end{equation}
We will prove this inductively with respect to $N$.
We follow the argument from \cite[Lemma 2.2]{Aptekarev2006} and \cite[Lemma 2.3.2]{Denisov2022}.

Let the zeros of $p_n$ be denoted by $(x_{k;n} : k =1,\ldots, n)$, where we order them as follows
\[
	x_{1;n} < x_{2;n} < \ldots < x_{n;n}.
\]
Let $p_{n+1}(z) = (z - x_{1;n+1}) (z - x_{n+1;n+1}) r_n(z)$. Then by a partial fraction decomposition we obtain
\begin{align*}
	\frac{p_{n+1}(z)}{p_n(z)} 
	&= 
	(z - x_{1;n+1}) (z - x_{n+1;n+1}) \frac{r_n(z)}{p_{n}(z)} \\
	&=
	(z - x_{1;n+1}) (z - x_{n+1;n+1}) 
	\sum_{k=1}^n \frac{r_n(x_{k;n})}{p_{n}'(x_{k;n})} \frac{1}{z-x_{k;n}} .
\end{align*}
Let us divide the last identity by $z$. Since the polynomials $p_n$ are monic we get
\begin{align*}
	1 &= 
	\lim_{|z| \to \infty} \frac{p_{n+1}(z)}{z p_n(z)} \\
	&= 
	\lim_{|z| \to \infty} \frac{z - x_{1;n+1}}{z}
	\sum_{k=1}^n \frac{r_n(x_{k;n})}{p_{n}'(x_{k;n})} 
	\lim_{|z| \to \infty} \frac{z - x_{n+1;n+1}}{z-x_{k;n}} \\
	&=
	\sum_{k=1}^n \frac{r_n(x_{k;n})}{p_{n}'(x_{k;n})}.
\end{align*}
Since the polynomials $r_n$ and $p_n$ are monic and their zeros are interlacing one can prove that
\[
	\frac{r_n(x_{k;n})}{p_{n}'(x_{k;n})} > 0.
\]
Hence for any compact $K \subset \CC \setminus \Delta$
\[
	M_1(K) := \sup_{n \geq 0} \sup_{z \in K} \frac{|p_{n+1}(z)|}{|p_n(z)|} < \infty.
\]
Consequently, for any $j \in \NN$
\begin{equation} \label{eq:14}
	M_j(K) := \sup_{n \geq 0} \sup_{z \in K} \frac{|p_{n+j}(z)|}{|p_n(z)|} \leq M_1(K)^j < \infty.
\end{equation}

Let us turn to the proof of \eqref{eq:6} for $N=0$. Since $(p_n : n \geq 0)$ are monic, by \eqref{eq:2a} we have
\begin{equation} \label{eq:19}
	x p_n = p_{n+1} + \sum_{k=0}^n J_{n,k} p_k.
\end{equation}
Hence, by dividing both sides by $x p_n$ we get
\begin{equation} \label{eq:15}
	1 - \frac{p_{n+1}}{x p_n} = \sum_{k=0}^n J_{n,k} \frac{p_k}{x p_n}.
\end{equation}
Let us denote 
\begin{equation} \label{eq:16}
	f(z) = 1 - \frac{p_{n+1}(z)}{z p_n(z)}.
\end{equation}
Since $(p_n : n \geq 0)$ are monic we get by \eqref{eq:15} and \eqref{eq:16}
\begin{equation} \label{eq:17}
	\lim_{|z| \to \infty} f(z) = 0 \quad \text{and} \quad
	\lim_{|z| \to \infty} z f(z) = J_{n,n}.
\end{equation}
Thus, we arrive at 
\begin{equation} \label{eq:18}
	\Res(f, \infty) = -J_{n,n}.
\end{equation}
On the other hand, let $\gamma_r$ be a positively oriented circle around the origin with radius $r$ which contains $\Delta$ in its interior. Then by the residue theorem
\begin{align*}
	\Res(f, \infty) 
	&= 
	-\frac{1}{2 \pi i} \int_{\gamma_r} f(z) \ud z \\
	&= 
	-\frac{1}{2 \pi i} \int_{\gamma_r} \bigg( 1 - \frac{p_{n+1}(z)}{z p_n(z)} \bigg) \ud z \\
	&=
	\frac{1}{2 \pi i} \int_{\gamma_r} \frac{p_{n+1}(z)}{p_n(z)} \frac{\ud z}{z}.
\end{align*}
Hence, by \eqref{eq:14} and \eqref{eq:18} we obtain
\[
	|J_{n,n}| \leq M_1(\gamma_r) =: C_0.
\]

Now, let $N \geq 1$ and suppose that
\begin{equation} \label{eq:22}
	\sup_{n \geq 0} |J_{n,n-j}| \leq C_j, \quad j=0, 1, \ldots N-1.
\end{equation}
We shall prove a similar bound for $|J_{n, n-N}|$. Similarly as before, let us divide both sides of \eqref{eq:19} by $x p_{n-N}$. Then
\[
	\frac{p_{n}}{p_{n-N}} - 
	\frac{p_{n+1}}{x p_{n-N}} - 
	\sum_{k=n-N+1}^{n} J_{n,k} \frac{p_k}{x p_{n-N}} = 
	\sum_{k=0}^{n-N} J_{n,k} \frac{p_k}{x p_{n-N}}.
\]
Let us denote
\[
	f(z) = 
	\frac{p_{n}(z)}{p_{n-N}(z)} - 
	\frac{p_{n+1}(z)}{z p_{n-N}(z)} - 
	\sum_{k=n-N+1}^{n} J_{n,k} \frac{p_k(z)}{z p_{n-N}(z)}.
\]
Then
\[
	\lim_{|z| \to \infty} f(z) = 0 \quad \text{and} \quad
	\lim_{|z| \to \infty} z f(z) = J_{n, n-N}.
\]
Hence
\begin{equation} \label{eq:20}
	\Res(f, \infty) = -J_{n, n-N}.
\end{equation}
On the other hand, analogously as before,
\begin{equation} \label{eq:21}
	\Res(f, \infty) =
	-\frac{1}{2 \pi i}
	\int_{\gamma} z f(z) \frac{\ud z}{z}.
\end{equation}
But by \eqref{eq:14} and \eqref{eq:22}
\[
	\sup_{z \in \gamma} |z f(z)| \leq 
	r M_N(\gamma_r) + M_{N+1}(\gamma_r) + \sum_{k=0}^{N-1} C_{k} M_{k+1}(\gamma_r) =: C_N.
\]
Hence by \eqref{eq:20} and \eqref{eq:21} we get
\[
	\sup_{n \geq N} |J_{n, n-N}| \leq C_N
\]
which ends the proof of \eqref{eq:6} for $N$.    % The proof is complete.
\end{proof}

\section{Density of zeros and weak limit of the Christoffel--Darboux kernel} \label{sec:6}
Let us recall that in \eqref{eq:49} we have defined
\begin{equation} \label{eq:27}
	\nu_n = \frac{1}{n} \sum_{y \in p_n^{-1}[\{0\}]} \delta_y \quad \text{and} \quad
	\ud \eta_n(x) = \frac{1}{n} K_n(x,x) \ud \mu(x),
\end{equation}
where in the first sum we take into account the possible multiplicities of the zeros of $p_n$, and
\begin{equation} \label{eq:28}
	K_n(x,y) = \sum_{j=0}^{n-1} p_j(x) {q_j(y)}.
\end{equation}

The proof of the following theorem is based on \cite[Proposition 2.3]{Simon2009}.
\begin{theorem} \label{thm:4}
Let $(\vec{n}_\ell : \ell \geq 0)$ be a path satisfying \eqref{eq:24}. Suppose that the corresponding matrix $J$, 
defined by \eqref{eq:2a}, satisfies \eqref{eq:NDB}. Then for any $\ell \geq 0$
\[
	\lim_{n \to \infty}
	\bigg| 
	\int_\RR x^\ell \ud \nu_{n} - \int_\RR x^\ell \ud \eta_n \bigg| = 0.
\]
\end{theorem}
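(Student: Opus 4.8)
The plan is to reduce both integrals to traces of powers of the matrix $J$ and then to show that the two traces differ only by an amount bounded independently of $n$.

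First I would identify the two sides. By Proposition~\ref{prop:2} the zeros of $p_n$, counted with multiplicity, are exactly the eigenvalues of the finite section $J_n = [J]_{0\ldots n-1;0\ldots n-1}$, so that
\[
	\int_\RR x^\ell \ud \nu_n = \frac{1}{n} \sum_{y : p_n(y)=0} y^\ell = \frac{1}{n} \tr(J_n^\ell).
\]
For the second integral, iterating \eqref{eq:2a} gives $x^\ell p_j = \sum_{k} [J^\ell]_{j,k} p_k$, a finite sum since $x^\ell p_j \in \calP_{j+\ell}$; biorthogonality \eqref{eq:1} then yields $\int_\RR x^\ell p_j q_j \ud\mu = [J^\ell]_{j,j}$. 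Summing over $j$ and using $K_n(x,x)=\sum_{j=0}^{n-1}p_j(x)q_j(x)$,
\[
	\int_\RR x^\ell \ud \eta_n = \frac{1}{n} \int_\RR x^\ell K_n(x,x)\ud\mu = \frac{1}{n} \sum_{j=0}^{n-1} [J^\ell]_{j,j}.
\]
Hence the theorem is equivalent to showing $\tfrac{1}{n}\big| \tr(J_n^\ell) - \sum_{j=0}^{n-1} [J^\ell]_{j,j} \big| \to 0$, i.e. to comparing the trace of the $\ell$-th power of the truncation with the trace of the truncation of the $\ell$-th power.

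Next I would expand both traces as sums over closed paths. Writing out the products,
\[
	\tr(J_n^\ell) = \sum_{i_0=0}^{n-1}\ \sum_{i_1,\ldots,i_{\ell-1}=0}^{n-1} J_{i_0 i_1} J_{i_1 i_2}\cdots J_{i_{\ell-1} i_0},
\]
whereas $\sum_{j<n}[J^\ell]_{j,j}$ has the same outer range $i_0\in\{0,\dots,n-1\}$ but the inner indices $i_1,\dots,i_{\ell-1}$ running over all of $\NN_0$. The difference is therefore precisely the sum of $J_{i_0 i_1}\cdots J_{i_{\ell-1} i_0}$ over closed $\ell$-loops based at some $i_0<n$ that leave the window $\{0,\ldots,n-1\}$, that is, with $i_m\ge n$ for some $m$.

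The heart of the argument, and the step I expect to be the main obstacle, is to show this difference is $O(1)$ in $n$. The difficulty is that $J$ is only lower Hessenberg: it has a single superdiagonal but possibly infinitely many nonzero subdiagonals, so the finite-bandwidth bookkeeping available in the tridiagonal case $r=1$ does not apply, and one must instead exploit that \emph{closed} loops cannot wander far. Because $J_{a,b}\ne 0$ forces $b\le a+1$, each step of a nonzero loop raises the index by at most $1$; since the loop is closed of length $\ell$, it has at most $\ell$ up-steps, whence every partial displacement $i_m-i_0$ is squeezed between $-\ell$ and $\ell$. Thus all indices of a nonzero loop lie in $[i_0-\ell,\,i_0+\ell]$. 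Two consequences follow: to reach a level $\ge n$ the basepoint must satisfy $i_0\ge n-\ell$, leaving at most $\ell$ admissible values of $i_0$; and every edge $(i_m,i_{m+1})$ lies in the band $D_{2\ell}$, so $|J_{i_m,i_{m+1}}|\le B_\ell := \sup_{(a,b)\in D_{2\ell}}|J_{a,b}|$, which is finite by \eqref{eq:NDB}. For each of the $\le \ell$ basepoints the intermediate indices range over a set of size $\le 2\ell+1$, giving at most $(2\ell+1)^{\ell-1}$ loops, each contributing at most $B_\ell^\ell$. Hence the whole difference is bounded by $\ell\,(2\ell+1)^{\ell-1} B_\ell^{\ell}=:C(\ell)$, a constant independent of $n$, and dividing by $n$ gives $C(\ell)/n\to 0$, which is the claim. (Proposition~\ref{prop:3} already guarantees that $J^\ell$ inherits \eqref{eq:NDB}, which controls the individual diagonal entries; but the essential content here is the uniform bound on the boundary loops just described, which is what makes the Hessenberg structure cooperate with \eqref{eq:NDB}.)
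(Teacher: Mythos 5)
Your proposal is correct and follows essentially the same route as the paper: both reduce the two integrals to $\tfrac{1}{n}\tr(J_n^\ell)$ and $\tfrac{1}{n}\sum_{j<n}[J^\ell]_{j,j}$ via Proposition~\ref{prop:2} and the matrix representation, and both observe that the Hessenberg structure confines the discrepancy to the $\ell$ diagonal entries with index in $[n-\ell,n)$, each uniformly bounded. The only difference is presentational: the paper invokes Proposition~\ref{prop:3} (applied to $J$ and to $J_n$, using $|[J_n]_{i,j}|\le|J_{i,j}|$) to bound those boundary terms, whereas you re-derive the same bound by counting closed loops explicitly.
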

\begin{proof}
By Proposition~\ref{prop:2} we obtain
\[
	\int_\RR x^\ell \ud \nu_{n} = 
	\frac{1}{n} \tr (J_n^\ell) =
	\frac{1}{n} \sum_{j=0}^{n-1} [ J_n^\ell ]_{j,j} .
\]
By \eqref{eq:2a} we have
\begin{align*}
	\int_\RR x^\ell \ud \eta_n &= 
	\frac{1}{n} \sum_{j=0}^{n-1} \int_\RR x^\ell p_j(x) {q_j(x)} \ud \mu(x) \\
	&= 
	\frac{1}{n} \sum_{j=0}^{n-1} \big\langle M_x^\ell p_j, q_j \big\rangle_{L^2(\mu)} \\
	&=
	\frac{1}{n} \sum_{j=0}^{n-1} [ J^\ell ]_{j,j} .
\end{align*}
Hence
\begin{align}
	\nonumber
	\bigg| 
	\int_\RR x^\ell \ud \nu_{n} - \int_\RR x^\ell \ud \eta_n \bigg| &\leq
	\frac{1}{n} 
	\sum_{j=0}^{n-1} 
	\big| [ J_n^\ell ]_{j,j} - [ J^\ell ]_{j,j} \big| \\
	\nonumber
	&=
	\frac{1}{n} 
	\sum_{j=n-\ell}^{n-1} 
	\big| [ J_n^\ell ]_{j,j} - [ J^\ell ]_{j,j} \big| \\
	\label{eq:26}
	&\leq
	\frac{1}{n}
	\sum_{j=n-\ell}^{n-1} 
	\big( |[ J_n^\ell ]_{j,j}| + |[ J^\ell ]_{j,j}| \big).
\end{align}
By \eqref{eq:NDB} and Proposition~\ref{prop:3} we obtain that any fixed $\ell$ 
\[
	\sup_{(i,j) \in D_M} |[J^\ell]_{i,j}| < \infty \quad \text{for any } M \geq 0.
\]
Next, since
\[
	|[J_n]_{i,j}| \leq |J_{i,j}|,
\]
Proposition~\ref{prop:3} together with \eqref{eq:NDB} implies the existence of constants $c(M,\ell) \geq 1$ and $R(M, \ell) \geq M$
such that
\begin{align*}
	\sup_{(i,j) \in D_M} |[J_n^\ell]_{i,j}| 
	&\leq 
	c(M, \ell) \Big( \sup_{(i,j) \in D_{R(M, \ell)}} |[J_n]_{i,j}| \Big)^\ell \\
	&\leq
	c(M, \ell) \Big( \sup_{(i,j) \in D_{R(M, \ell)}} |J_{i,j}| \Big)^\ell .
\end{align*}
Hence
\[
	\sup_{n \geq 1} \sup_{(i,j) \in D_M} |[J_n^\ell]_{i,j}| < \infty \quad \text{for any } M \geq 0.
\]
Thus,
\[
	\sup_{n \geq 1} 
	\sup_{n-\ell \leq j < n} 
	\big( [ |J_n^\ell ]_{j,j}| + |[ J^\ell ]_{j,j}| \big) < \infty.
\]
Hence, together with \eqref{eq:26} this implies the existence of a constant $c > 0$ such that
\[
	\bigg| 
	\int_\RR x^\ell \ud \nu_{n} - \int_\RR x^\ell \ud \eta_n \bigg|
	\leq \frac{c}{n}, 
\]
from which the conclusion follows.
\end{proof}

\begin{corollary} \label{cor:1}
Let the hypotheses of Theorem~\ref{thm:4} be satisfied. Let $(n_k : k \in \NN_0)$ be an increasing sequence of positive integers. Suppose that there is a compact set $K \subset \RR$ such that $\supp(\mu) \subset K$ and $\supp(\nu_{n_k}) \subset K$ for any $k \in \NN_0$. If
\begin{equation} \label{eq:4}
	\sup_{k \in \NN_0} \frac{1}{n_k} \int_\RR |K_{n_k}(x,x)| \ud \mu(x) < \infty,
\end{equation}
then 
\begin{equation} \label{eq:5'}
	\lim_{k \to \infty} \bigg| \int_\RR f \ud \nu_{n_k} - \int_\RR f \ud \eta_{n_k} \bigg| = 0, \quad f \in \calC(K).
\end{equation}
In particular, for any probability measure $\nu_\infty$ supported on $K$ one has the equivalence\footnote{By $\mu_{n} \xrightarrow{w} \mu$ we denote the weak convergence of finite measures, i.e. $\int_\RR f \ud \mu_n \to \int_\RR f \ud \mu$ for any $f \in \calC_b(\RR)$.}
\begin{equation} \label{eq:5}
	\nu_{n_k} \xrightarrow{w} \nu_\infty \quad \Leftrightarrow \quad
	\eta_{n_k} \xrightarrow{w} \nu_\infty.
\end{equation}
\end{corollary}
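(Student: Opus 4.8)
The plan is to upgrade the polynomial-moment agreement of Theorem~\ref{thm:4} to agreement against all of $\calC(K)$ by a standard density argument, the only real subtlety being that $\eta_{n_k}$ is a \emph{signed} measure (for $r>1$ the diagonal $K_n(x,x)$ need not be nonnegative), so I must control its total variation uniformly in $k$. First I would record the relevant mass facts. Since $p_{n_k}$ has $n_k$ real zeros (they lie in $K\subset\RR$ by hypothesis), $\nu_{n_k}$ is a genuine probability measure supported in $K$, so $\norm{\nu_{n_k}}_{\mathrm{TV}} = 1$. By the biorthogonality \eqref{eq:1} the total mass of $\eta_{n_k}$ is $\eta_{n_k}(\RR) = \tfrac{1}{n_k}\int_\RR K_{n_k}(x,x)\,\ud\mu = \tfrac{1}{n_k}\sum_{j=0}^{n_k-1}\int_\RR p_j q_j\,\ud\mu = 1$, and it is supported in $\supp(\mu)\subset K$. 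Crucially, hypothesis \eqref{eq:4} says exactly that $\norm{\eta_{n_k}}_{\mathrm{TV}} = \tfrac{1}{n_k}\int_\RR |K_{n_k}(x,x)|\,\ud\mu \leq C$ for some constant $C$ uniform in $k$.

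Next, given $f \in \calC(K)$ and $\varepsilon > 0$, I would use the Weierstrass approximation theorem on the compact set $K$ to choose a polynomial $p \in \RR[x]$ with $\sup_{x \in K}|f(x)-p(x)| < \varepsilon$. Then I split
\[
	\int_\RR f \,\ud\nu_{n_k} - \int_\RR f\,\ud\eta_{n_k}
	= \int_\RR (f-p)\,\ud\nu_{n_k}
	+ \bigg(\int_\RR p\,\ud\nu_{n_k} - \int_\RR p\,\ud\eta_{n_k}\bigg)
	+ \int_\RR (p-f)\,\ud\eta_{n_k}.
\]
Because both measures are supported in $K$, the first term is bounded by $\varepsilon\,\norm{\nu_{n_k}}_{\mathrm{TV}} = \varepsilon$ and the third by $\varepsilon\,\norm{\eta_{n_k}}_{\mathrm{TV}} \leq \varepsilon C$. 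The middle term is a fixed linear combination of the monomial differences $\int x^\ell\,\ud\nu_{n_k}-\int x^\ell\,\ud\eta_{n_k}$, each of which tends to $0$ by Theorem~\ref{thm:4} (in particular along the subsequence $(n_k)$). Hence $\limsup_{k\to\infty}\abs{\int_\RR f\,\ud\nu_{n_k}-\int_\RR f\,\ud\eta_{n_k}} \leq (1+C)\varepsilon$, and letting $\varepsilon \to 0$ yields \eqref{eq:5'}.

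Finally, for the equivalence \eqref{eq:5} I would feed bounded continuous test functions into \eqref{eq:5'}: for $f\in\calC_b(\RR)$ the restriction $f|_K$ lies in $\calC(K)$, and since $\supp(\nu_{n_k}),\supp(\eta_{n_k})\subset K$ the identity \eqref{eq:5'} gives $\int_\RR f\,\ud\nu_{n_k} - \int_\RR f\,\ud\eta_{n_k} \to 0$. Consequently $\int_\RR f\,\ud\nu_{n_k}\to\int_\RR f\,\ud\nu_\infty$ for every $f\in\calC_b(\RR)$ holds if and only if $\int_\RR f\,\ud\eta_{n_k}\to\int_\RR f\,\ud\nu_\infty$ does, which by the definition of weak convergence in the footnote is precisely $\nu_{n_k}\xrightarrow{w}\nu_\infty \Leftrightarrow \eta_{n_k}\xrightarrow{w}\nu_\infty$. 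I expect the main (indeed the only) obstacle to be the bookkeeping forced by the signedness of $\eta_{n_k}$: in the classical $r=1$ case $\eta_n$ is automatically a probability measure, whereas here it is merely a signed measure of total mass $1$, and the third term in the splitting above can be controlled only thanks to the uniform total-variation bound \eqref{eq:4}. This is exactly the place where that hypothesis is used.
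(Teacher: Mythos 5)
Your proposal is correct and follows essentially the same route as the paper: the uniform total-variation bound on the signed measures $\eta_{n_k}$ extracted from \eqref{eq:4}, a Weierstrass approximation of $f\in\calC(K)$ by a polynomial, the three-term splitting bounded by $(1+C)\varepsilon$ plus the polynomial moment differences handled by Theorem~\ref{thm:4}, and then restriction of bounded continuous functions to $K$ for the equivalence \eqref{eq:5}. Your added emphasis on the signedness of $\eta_{n_k}$ being the sole reason hypothesis \eqref{eq:4} is needed matches the paper's own remarks following the corollary.
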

\begin{proof}
First of all, it is immediate from the definition \eqref{eq:27} that $(\nu_{n_k} : k \geq 1)$ is a sequence of probability measures. Let us observe that the sequence $(\eta_{n_k} : k \geq 1)$ is bounded in the total variation norm. 
Indeed, in view of \eqref{eq:27}, we have
\[
	\| \eta_{n_k} \|_{\mathrm{TV}} =
	|\eta_{n_k}|(\RR) = \frac{1}{n_k} \int_\RR |K_{n_k}(x,x)| \ud \mu(x).
\]
Hence, by \eqref{eq:4}, we obtain
\begin{equation} \label{eq:4'}
	\sup_{k \geq 0} \| \eta_{n_k} \|_{\mathrm{TV}} =: C < \infty.
\end{equation}

Take $f \in \calC(K)$. Then, by the Weierstrass theorem, for any $\epsilon > 0$, 
there exists $P_\epsilon \in \RR[x]$ such that
\begin{equation} \label{eq:7a}
	\sup_{x \in K} |f(x) - P_\epsilon(x)| < \epsilon.
\end{equation}
Hence, by \eqref{eq:7a} and \eqref{eq:4'},
\begin{align*}
	\bigg|\int_\RR f \ud \nu_{n_k} - \int_\RR f \ud \eta_{n_k} \bigg| 
	&=
	\bigg| \int_\RR (f - P_\epsilon) \ud \nu_{n_k} - \int_\RR (f-P_\epsilon) \ud \eta_{n_k} + \int_\RR P_\epsilon \ud \nu_{n_k} - \int_\RR P_\epsilon \ud \eta_{n_k} \bigg| \\
	&\leq	
	( 1 + C) \epsilon + 
	\bigg| \int_\RR P_\epsilon \ud \nu_{n_k} - \int_\RR P_\epsilon \ud \eta_{n_k} \bigg|.
\end{align*}
Thus, by Theorem~\ref{thm:4} we get
\[
	\lim_{k \to \infty}
	\bigg|\int_\RR f \ud \nu_{n_k} - \int_\RR f \ud \eta_{n_k} \bigg| \leq
	(1+C) \epsilon .
\]
By letting $\epsilon \to 0$ we obtain \eqref{eq:5'}. The conclusion \eqref{eq:5} follows from this immediately.
\end{proof}

Let us comment that the condition \eqref{eq:4} seems not to be automatically true. 
Observe however that in view of \eqref{eq:27}, \eqref{eq:28} and \eqref{eq:1} we have
\[
	\frac{1}{n_k} \int_\RR K_{n_k}(x,x) \ud \mu(x) = \frac{n_k}{n_k} = 1.
\]
Hence, if
\begin{equation} \label{eq:POS}
	\tag{POS}
	K_{n_k}(x,x) \geq 0 \quad \text{for a.e. } x \in \supp(\mu),\ k \geq 0,
\end{equation}
then \eqref{eq:4} is satisfied.

A direct consequence of \cite[Section 3.2 and 4.1]{Kuijlaars2010} is the following result.
\begin{lemma} \label{lem:2}
Let $n \in \NN$ be given and suppose that for any points $\{x_1, \ldots, x_n \} \subset \supp(\mu)$ satisfying $x_1 < \ldots < x_n$ one has
\begin{equation} \label{eq:detPOS}
	\det \big[ K_n(x_i, x_j) \big]_{i,j=1,\ldots,n} \geq 0.
\end{equation}
Then $K_n(x,x) \geq 0$ for any $x \in \supp(\mu)$, and consequently, \eqref{eq:4} is satisfied.
\end{lemma}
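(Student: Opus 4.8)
The plan is to recognise $\det\big[K_n(x_i,x_j)\big]_{i,j=1}^n$ as the top correlation function of the determinantal point process attached to the biorthogonal family $(p_\ell),(q_\ell)$ (this is the content of the cited sections of \cite{Kuijlaars2010}), and to recover the one-point function $K_n(x,x)$ by integrating out the variables $x_n,x_{n-1},\ldots,x_2$ one at a time. Two structural identities coming from \eqref{eq:1} drive the argument. Expanding $K_n$ as in \eqref{eq:28} and using $\int_\RR q_k p_l \ud\mu = \delta_{k,l}$ gives the \emph{directed} reproducing property
\[
	\int_\RR K_n(x,y)\,K_n(y,z)\ud\mu(y) = K_n(x,z),
\]
and, specialising $x=z$ and integrating once more, $\int_\RR K_n(x,x)\ud\mu(x) = n$.

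Next I would establish the reduction formula for this (non-symmetric) kernel. Writing $R_k(x_1,\ldots,x_k) := \det\big[K_n(x_i,x_j)\big]_{i,j=1}^k$, I claim that
\[
	\int_\RR R_k(x_1,\ldots,x_k)\ud\mu(x_k) = (n-k+1)\,R_{k-1}(x_1,\ldots,x_{k-1}).
\]
This is proved by expanding the determinant over $\sigma\in S_k$ and integrating in $x_k$: permutations fixing $k$ contribute $\int_\RR K_n(x_k,x_k)\ud\mu(x_k) = n$ times $R_{k-1}$, while for each $\sigma$ with $\sigma(k)\neq k$ the reproducing property collapses the two factors containing $x_k$ into one entry, and summing over the $(k-1)$ ways of inserting $k$ into a cycle (each flipping the sign) produces $-(k-1)R_{k-1}$; together this gives $(n-k+1)R_{k-1}$. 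Iterating from $k=n$ down to $k=2$ and multiplying the resulting factors $1\cdot 2\cdots(n-1)$ yields the pointwise identity
\[
	\int_\RR\!\cdots\!\int_\RR R_n(x_1,\ldots,x_n)\ud\mu(x_2)\cdots\ud\mu(x_n) = (n-1)!\,K_n(x_1,x_1).
\]
Absolute convergence, needed to run Fubini at each step, is immediate since $K_n$ is a finite sum of products of $L^2(\mu)$ functions.

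Finally I would upgrade the hypothesis \eqref{eq:detPOS}. Because $R_n$ is invariant under a common permutation of its rows and columns, it is a symmetric function of $(x_1,\ldots,x_n)$, so the inequality assumed for increasing tuples forces $R_n\geq 0$ on every tuple with distinct coordinates, while $R_n$ vanishes whenever two coordinates coincide (two equal rows). Hence, for each fixed $x_1$, the integrand above is $\geq 0$ for $\mu^{\otimes(n-1)}$-a.e.\ $(x_2,\ldots,x_n)$, so the left-hand side is nonnegative and $K_n(x_1,x_1)\geq 0$ for every $x_1\in\supp(\mu)$. Then $|K_n(x,x)|=K_n(x,x)$ a.e.\ on $\supp(\mu)$, whence $\tfrac1n\int_\RR|K_n(x,x)|\ud\mu = \tfrac1n\int_\RR K_n(x,x)\ud\mu = 1$, which is exactly \eqref{eq:4}. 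The only genuinely delicate point is the combinatorial bookkeeping in the reduction formula, and in particular checking that it is the \emph{directed} reproducing property $\int_\RR K_n(x,y)K_n(y,z)\ud\mu(y)=K_n(x,z)$ (not a symmetric one) that makes the cycle-shortening cancellation work, so that the argument survives the loss of symmetry of $K_n$; the passage from ordered to arbitrary tuples is then a routine symmetry observation.
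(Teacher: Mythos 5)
Your proof is correct and is exactly the argument the paper delegates to the citation of \cite{Kuijlaars2010}: recognise \eqref{eq:detPOS} as positivity of the top correlation function of a biorthogonal (determinantal) ensemble and integrate out variables one at a time using the directed reproducing property $\int_\RR K_n(x,y)K_n(y,z)\ud\mu(y)=K_n(x,z)$, which survives the loss of symmetry of $K_n$. The combinatorial reduction formula, the $(n-1)!$ normalisation, and the passage from ordered to arbitrary tuples are all handled correctly, so this is a valid self-contained version of the paper's (cited) proof.
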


According to Kuijlaars \cite{Kuijlaars2010}, the condition \eqref{eq:detPOS} implies the existence of a determinantal point process whose correlation kernel is equal to $K_n(\cdot,\cdot)$. Let us mention that \cite[Section 4]{Kuijlaars2010} is devoted to discussing conditions under which \eqref{eq:detPOS} is satisfied. In particular, this condition is always satisfied for $r=1$ where it leads to the so-called \emph{orthogonal polynomial ensembles}. A survey of some models in probability theory leading to orthogonal polynomial ensembles has been given in \cite{Konig2005}.

\section{The Nevai condition} \label{sec:7}
Let $(\vec{n}_\ell : \ell \geq 0)$ be a path satisfying \eqref{eq:24}. Let $K_n$ be the corresponding Christoffel--Darboux kernel.
Inspired by Nevai \cite[Section 6.2]{Nevai1979} (see also \cite{Breuer2010a}) we define for every bounded measurable function $f$ and $x \in \supp(\mu)$
\[
	G_n[f](x) = \frac{1}{K_n(x,x)} \int_\RR K_n(x,y) K_n(y,x) f(y) \ud \mu(y) ,
\]
provided that $K_n(x,x) \neq 0$. 

\begin{proposition}
Let $n \in \NN$ and $x \in \supp(\mu)$. Suppose that $K_n(x,x) \neq 0$. Then the definition of $G_n[f]$ makes sense for any bounded measurable function $f$.
\end{proposition}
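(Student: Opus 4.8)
The plan is to verify that the integral in the definition of $G_n[f](x)$ converges absolutely; the prefactor $1/K_n(x,x)$ is harmless because $K_n(x,x) \neq 0$ is assumed. Since $f$ is bounded, I would first reduce to an $f$-free statement: for every bounded measurable $f$,
\[
	\Bigl| \int_\RR K_n(x,y) K_n(y,x) f(y) \ud \mu(y) \Bigr|
	\leq \norm{f}_\infty \int_\RR |K_n(x,y)| \, |K_n(y,x)| \ud \mu(y),
\]
so it suffices to show that $y \mapsto K_n(x,y) K_n(y,x)$ lies in $L^1(\mu)$ for the fixed point $x \in \supp(\mu)$.

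Next I would exploit the asymmetry of the two factors. Viewed as a function of $y$, the factor $K_n(y,x) = \sum_{j=0}^{n-1} p_j(y) q_j(x)$ is an honest polynomial of degree at most $n-1$, its coefficients being the fixed numbers $q_j(x)$; denote it $P_x(y)$. The other factor carries the densities: recalling $q_j(y) = \sum_{k=1}^r A_{\vec{n}_{j+1},k}(y) w_k(y)$, I would rearrange the summation as
\[
	K_n(x,y) = \sum_{j=0}^{n-1} p_j(x) q_j(y) = \sum_{k=1}^r B_{x,k}(y)\, w_k(y),
	\qquad
	B_{x,k}(y) := \sum_{j=0}^{n-1} p_j(x)\, A_{\vec{n}_{j+1},k}(y),
\]
so that $K_n(x,\cdot)$ is a finite sum of fixed polynomials multiplied by the Radon--Nikodym densities $w_k$.

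With this decomposition the estimate is immediate. Since each $w_k \geq 0$, the triangle inequality gives $|K_n(x,y)|\,|K_n(y,x)| \leq |P_x(y)| \sum_{k=1}^r |B_{x,k}(y)|\, w_k(y)$, and integrating against $\mu$ while using $w_k \ud \mu = \ud \mu_k$ yields
\[
	\int_\RR |K_n(x,y)|\,|K_n(y,x)| \ud \mu(y)
	\leq \sum_{k=1}^r \int_\RR |P_x(y)|\,|B_{x,k}(y)| \ud \mu_k(y).
\]
Each right-hand integrand is the modulus of a single fixed polynomial in $y$, hence dominated by a polynomial in $|y|$; since every $\mu_k$ has all moments finite, each of these integrals is finite, which proves the claim. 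The only genuine subtlety --- and the step I would flag as the crux --- is that the $q_j$ are not polynomials but polynomials times the densities $w_k$, so one cannot directly invoke finiteness of the moments of $\mu$; peeling off $w_k$ into $\ud \mu_k$ is exactly what converts the problem into the finite-moment hypothesis already imposed on each $\mu_k$.
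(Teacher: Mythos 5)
Your proof is correct, but it takes a genuinely different route from the paper's. The paper applies the Cauchy--Schwarz inequality twice: first to split $\int_\RR |K_n(x,y)K_n(y,x)|\ud\mu(y)$ into the product of the $L^2(\mu)$-norms of $K_n(x,\cdot)$ and $K_n(\cdot,x)$, and then to the defining sums to bound these by $\bigl(\sum_j |p_j(x)|^2\bigr)\bigl(\sum_j \norm{q_j}_{L^2(\mu)}^2\bigr)$ and $\bigl(\sum_j |q_j(x)|^2\bigr)\bigl(\sum_j \norm{p_j}_{L^2(\mu)}^2\bigr)$, respectively. You instead exploit the asymmetric structure of the two factors: $K_n(\cdot,x)$ is an honest polynomial in $y$, while $K_n(x,\cdot)=\sum_{k=1}^r B_{x,k}\,w_k$ is a finite sum of polynomials times the densities, so peeling each $w_k$ into $\ud\mu_k$ reduces the whole estimate to the finiteness of the moments of the $\mu_k$, which is a standing assumption. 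Your argument is arguably the more robust of the two: the paper's final step tacitly uses $q_j\in L^2(\mu)$, i.e. $\int_\RR \bigl|\sum_k A_{\vec{n}_{j+1},k}\,w_k\bigr|^2\ud\mu<\infty$, which is clear for the canonical choice $\mu=\mu_1+\cdots+\mu_r$ (then $0\le w_k\le 1$ $\mu$-a.e.) but is not guaranteed for an arbitrary dominating measure with unbounded densities, whereas your decomposition only ever integrates polynomials against the $\mu_k$ themselves. What the paper's route buys in exchange is separate $L^2(\mu)$ bounds on $K_n(x,\cdot)$ and $K_n(\cdot,x)$, which are not needed for this statement.
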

\begin{proof}
We have 
\[
	\big| G_n[f](x) \big| \leq 
	\frac{1}{|K_n(x,x)|} \sup_{y \in \RR} |f(y)|
	\int_\RR |K_n(x,y) K_n(y,x)| \ud \mu(y).
\]
Hence, we need to prove
\begin{equation} \label{eq:45}
	\int_\RR |K_n(x,y) K_n(y,x)| \ud \mu(y) < \infty.
\end{equation}
To do so, by the Cauchy--Schwarz inequality we obtain
\[
	\int_\RR |K_n(x,y) K_n(y,x)| \ud \mu(y) \leq
	\bigg( \int_\RR |K_n(x,y)|^2 \ud \mu(y) \bigg)^{1/2}
	\bigg( \int_\RR |K_n(y,x)|^2 \ud \mu(y) \bigg)^{1/2}.
\]
Next, by the Cauchy--Schwarz inequality applied to \eqref{eq:28} gives
\[
	|K_n(x,y)|^2 \leq 
	\bigg( \sum_{j=0}^{n-1} |p_j(x)|^2 \bigg) \cdot
	\bigg( \sum_{j=0}^{n-1} |q_j(y)|^2 \bigg).
\]
Hence
\[
	\int_\RR |K_n(x,y)|^2 \ud \mu(y) \leq
	\bigg( \sum_{j=0}^{n-1} |p_j(x)|^2 \bigg) \cdot
	\bigg( \sum_{j=0}^{n-1} \|q_j\|_{L^2(\mu)}^2 \bigg) < \infty.
\]
Similarly we obtain
\[
	\int_\RR |K_n(y,x)|^2 \ud \mu(y) \leq
	\bigg( \sum_{j=0}^{n-1} |q_j(x)|^2 \bigg) \cdot
	\bigg( \sum_{j=0}^{n-1} \|p_j\|_{L^2(\mu)}^2 \bigg) < \infty,
\]
which implies \eqref{eq:45}.
\end{proof}

We say that $(K_n : n \geq 0)$ satisfies the \emph{Nevai condition} at $x \in \supp(\mu)$
if
\begin{equation} \label{eq:38}
	\lim_{n \to \infty}
	G_n[f](x) = 
	f(x), \quad f \in \calC_b(\RR).
\end{equation}
Our motivation of examining the condition \eqref{eq:38} comes from the desire to understand the pointwise behaviour of $K_n(x,x)$. A very modest result in this direction is the following:
\begin{proposition} \label{prop:6}
Let $x \in \supp(\mu)$ be an isolated point. If \eqref{eq:38} is satisfied for $x$, then
\begin{equation} \label{eq:43}
	\lim_{n \to \infty} K_n(x,x) = \frac{1}{\mu(\{x\})}.
\end{equation}
\end{proposition}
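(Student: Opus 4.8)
The plan is to exploit the Nevai condition \eqref{eq:38} with a single, carefully chosen test function that isolates the atom of $\mu$ at $x$. The key structural fact I would use is that since $x$ is an isolated point of $\supp(\mu)$, there exists $\delta > 0$ with
\[
	\supp(\mu) \cap (x-\delta, x+\delta) = \{x\},
\]
and moreover $\mu(\{x\}) > 0$: because $x \in \supp(\mu)$, the interval $(x-\delta,x+\delta)$ has positive $\mu$-measure, and since $x$ is its only support point in that interval, this measure equals $\mu(\{x\})$.

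With this in hand, I would select a function $f \in \calC_b(\RR)$ with $f(x)=1$, $0 \leq f \leq 1$, and $\supp(f) \subset (x-\delta, x+\delta)$; an ordinary continuous bump function does the job and lies in $\calC_b(\RR)$. I would then evaluate $G_n[f](x)$ for this choice. Because $f$ vanishes outside $(x-\delta,x+\delta)$ and the only point of $\supp(\mu)$ inside that interval is $x$ itself, the integrand $K_n(x,y)K_n(y,x)f(y)$ is, as a function on $\supp(\mu)$, concentrated at the single atom $y=x$. Hence the defining integral collapses to
\[
	\int_\RR K_n(x,y) K_n(y,x) f(y) \ud \mu(y) = K_n(x,x)^2\, f(x)\, \mu(\{x\}) = K_n(x,x)^2\, \mu(\{x\}).
\]
Dividing by $K_n(x,x)$, which is nonzero for large $n$ (as is implicitly required for \eqref{eq:38} to be meaningful), yields the clean identity $G_n[f](x) = K_n(x,x)\, \mu(\{x\})$.

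Finally I would invoke the Nevai condition \eqref{eq:38} with this particular $f$, which gives $G_n[f](x) \to f(x) = 1$, that is, $K_n(x,x)\, \mu(\{x\}) \to 1$. Since $\mu(\{x\}) > 0$, dividing through produces \eqref{eq:43}. I do not anticipate a genuine obstacle here: the argument is a direct substitution into the definition of $G_n[f]$, and the only step deserving care is the reduction of the integral to the atom at $x$, which follows at once from the isolation of $x$ in $\supp(\mu)$ together with the support constraint on $f$.
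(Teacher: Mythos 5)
Your proof is correct and follows essentially the same route as the paper: the paper also chooses a continuous bump (the tent function $f(y)=\tfrac{1}{\varepsilon}\max(\varepsilon-|y-x|,0)$) supported in a neighbourhood isolating $x$, reduces the integral to $K_n(x,x)^2\mu(\{x\})$, and applies \eqref{eq:38}. Your additional remark that $\mu(\{x\})>0$ follows from $x$ being an isolated point of $\supp(\mu)$ is a valid and welcome clarification of a step the paper leaves implicit.
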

\begin{proof}
Let $\varepsilon > 0$ be such that $[x-\varepsilon, x+\varepsilon] \cap \supp(\mu) = \{x\}$.
Set $f(y) = \frac{1}{\varepsilon} \max(\varepsilon - |y-x|, 0)$. Then $f \in \calC_b(\RR)$, $\supp(f) = [x-\varepsilon,x+\varepsilon]$ and $f(x) = 1$. Hence
\[
	\frac{1}{K_n(x,x)} \int_\RR K_n(x,y) K_n(y,x) f(y) \ud \mu(y) = 
	K_n(x,x) \mu(\{x\}).
\]
On the other hand, by \eqref{eq:38}, we get
\[
	\lim_{n \to \infty} \frac{1}{K_n(x,x)} \int_\RR K_n(x,y) K_n(y,x) f(y) \ud \mu(y) = 1.
\]
By combining the last two formulas the result follows.
\end{proof}
In the classical case $r=1$, the formula \eqref{eq:43} holds for any $x \in \supp(\mu)$ (with the convention that $1/0 = +\infty$) provided the measure $\mu$ is determined by its moments, see e.g. \cite[Corollary 2.6]{Shohat1943}. This motivates the following problem.

\begin{problem} \label{prob:1}
Suppose that the support of $\mu$ is compact. Is it true that \eqref{eq:43} holds for any $x \in \supp(\mu)$?
\end{problem}

Let us observe that \eqref{eq:43} implies the positivity of $K_n(x,x)$ for large $n$. Hence, the positive answer to Problem~\ref{prob:1} leads to a weaker version of \eqref{eq:POS}.

Our aim is to prove that under some hypotheses \eqref{eq:38} holds for any $f \in \RR[x]$. It turns out that in this situation $G_n[f]$ can be rewritten in terms of $J$.
\begin{lemma} \label{lem:1} 
For any $k \geq 0$ we have
\[
	\int_\RR K_n(x,y) y^k K_n(y,x) \ud \mu(y) =
	\big\langle 
		\Pi_n \vec{q}(x), 
		J^k \Pi_n \vec{p}(x) 
	\big\rangle_{\ell^2},
\]
where the sequence $\Pi_n u$ is defined by
\[
	[\Pi_n u]_k =
	\begin{cases}
		u_k, & k < n, \\
		0, & \text{otherwise},
	\end{cases}
\]
and $\vec{p}(x)$ and $\vec{q}(x)$ are defined as follows
\[
	\vec{p}(x) = (p_0(x),p_1(x), \ldots)^t, \quad
	\vec{q}(x) = (q_0(x), q_1(x), \ldots)^t.
\]
\end{lemma}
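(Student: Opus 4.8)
The plan is to expand both copies of the kernel in the biorthogonal system and thereby reduce the integral to a single entry of $J^k$. Writing $K_n(x,y) = \sum_{a=0}^{n-1} p_a(x) q_a(y)$ and $K_n(y,x) = \sum_{b=0}^{n-1} p_b(y) q_b(x)$ according to \eqref{eq:28}, I would multiply by $y^k$ and integrate. Because both expansions are finite, the (finite) summations may be interchanged with the integral with no analytic scruples, separating the $x$-dependence from the $y$-dependence:
\[
	\int_\RR K_n(x,y) y^k K_n(y,x) \ud\mu(y) = \sum_{a=0}^{n-1} \sum_{b=0}^{n-1} p_a(x) q_b(x) \int_\RR y^k p_b(y) q_a(y) \ud\mu(y).
\]
This isolates the quantity to be identified, namely $\int_\RR y^k p_b(y) q_a(y) \ud\mu(y)$, so the whole lemma reduces to evaluating it.

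The key step is to recognize this integral as $\langle M_x^k p_b, q_a\rangle_{L^2(\mu)}$ and to show it equals $[J^k]_{b,a}$. Iterating the matrix representation \eqref{eq:2a} gives $M_x^k p_b = \sum_m [J^k]_{b,m} p_m$, where the sum is finite because $J$, and hence $J^k$, is banded above: as observed at the start of the proof of Proposition~\ref{prop:3}, $[J^k]_{b,m} = 0$ whenever $m > b+k$. Thus $M_x^k p_b$ is a genuine polynomial, and it lies in $L^2(\mu)$ since $\vec\mu$ has all moments finite. Pairing with $q_a$ and invoking the biorthogonality \eqref{eq:1}, every term of the finite sum vanishes except $m = a$, yielding $\int_\RR y^k p_b(y) q_a(y) \ud\mu(y) = [J^k]_{b,a}$. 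Substituting back, the integral becomes $\sum_{a,b} p_a(x) q_b(x) [J^k]_{b,a}$.

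It then remains to identify this double sum with the claimed $\ell^2$ inner product. Unwinding the definition of $\Pi_n$, the vector $J^k \Pi_n \vec p(x)$ has $b$-th entry $\sum_{a<n} [J^k]_{b,a} p_a(x)$, so that $\langle \Pi_n \vec q(x), J^k \Pi_n \vec p(x)\rangle_{\ell^2} = \sum_{b<n} q_b(x) \sum_{a<n} [J^k]_{b,a} p_a(x)$, which is literally the sum obtained above. I do not expect a genuine obstacle here: the argument is entirely finite-dimensional once the kernels are expanded, and the only thing requiring care is keeping the index conventions straight — the first kernel $K_n(x,y)$ supplies the $p_a(x)$ factor (second index of $J^k$) while the second kernel $K_n(y,x)$ supplies the $q_b(x)$ factor (first index of $J^k$), so that $J^k$ acts on the $p$-side and the $q$-side furnishes the left factor of the pairing, exactly as in the statement.
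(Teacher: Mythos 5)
Your proof is correct and follows essentially the same route as the paper: expand both kernels via \eqref{eq:28}, interchange the finite sums with the integral, identify $\int_\RR y^k p_b(y) q_a(y)\ud\mu(y)$ with $[J^k]_{b,a}$ by iterating \eqref{eq:2a} and using the biorthogonality \eqref{eq:1}, and recognize the resulting double sum as the $\ell^2$ pairing. The only difference is that you spell out the iteration step $M_x^k p_b=\sum_m [J^k]_{b,m}p_m$ explicitly, which the paper leaves implicit.
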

\begin{proof}
We have
\begin{align*}
	\int_\RR K_n(x,y) y^k K_n(y,x) \ud \mu(y) 
	&=
	\big\langle K_n(x,y), y^k K_n(y,x) \big\rangle_{L^2_y(\mu)} \\
	&=
	\bigg\langle 
		\sum_{\ell=0}^{n-1} p_\ell(x) q_{\ell}(y), 
		y^k \sum_{\ell'=0}^{n-1} p_{\ell'}(y) q_{\ell'}(x) 
	\bigg\rangle_{L^2_{y}(\mu)} \\
	&=
	\sum_{\ell=0}^{n-1} \sum_{\ell'=0}^{n-1} 
	p_{\ell}(x) q_{\ell'}(x) 
	\big\langle 
		q_{\ell}, 
		y^k p_{\ell'} 
	\big\rangle_{L^2(\mu)}.
\end{align*}
Let us recall that $\langle y p_{\ell'}, q_{\ell} \rangle_{L^2(\mu)} = J_{\ell', \ell}$. Hence we arrive at
\begin{align*}
	\int_\RR K_n(x,y) y^k K_n(y,x) \ud \mu(y) 
	&=
	\sum_{\ell=0}^{n-1} \sum_{\ell'=0}^{n-1} 
	p_{\ell}(x) q_{\ell'}(x) [J^k]_{\ell', \ell} \\
	&=
	\big\langle
		\Pi_n \vec{q}(x),
		J^k \Pi_n \vec{p}(x)
	\big\rangle_{\ell^2}. \qedhere
\end{align*}
\end{proof}

We are ready to prove our main result of this section.
\begin{theorem} \label{thm:5}
Assume that:
\begin{enumerate}[(a)]
\item the matrix $J$ satisfies \eqref{eq:NDB},
\item $K_n(x,x) > 0$ for large $n$,
\item for any $N \geq 1$ one has
$\begin{aligned}[b]
	\lim_{n \to \infty} \frac{q_{\ell}(x) p_{\ell'}(x)}{K_n(x,x)} = 0
\end{aligned}$, where $\ell' \in [n-N, n)$ and $\ell \in [n, n+N]$.
\end{enumerate}
Then the convergence \eqref{eq:38} holds for any polynomial $f \in \RR[x]$.
\end{theorem}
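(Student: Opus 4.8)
The plan is to transport the whole computation to the Hessenberg matrix $J$ by means of Lemma~\ref{lem:1}, and then to exploit the elementary but decisive fact that the (formal) vector $\vec p(x)$ is a right eigenvector of $J$ for the eigenvalue $x$. First I would fix $f\in\RR[x]$, write $f(y)=\sum_{k=0}^{d}c_k y^k$ with $d=\deg f$, and combine Lemma~\ref{lem:1} with linearity to obtain, for all $n$ large enough that $K_n(x,x)>0$ (hypothesis (b)),
\[
	G_n[f](x)=\frac{1}{K_n(x,x)}\big\langle \Pi_n\vec q(x),\, f(J)\,\Pi_n\vec p(x)\big\rangle_{\ell^2}.
\]
Since $K_n(x,x)=\langle \Pi_n\vec q(x),\Pi_n\vec p(x)\rangle_{\ell^2}$, the target value may be written as $f(x)=\tfrac{1}{K_n(x,x)}\langle \Pi_n\vec q(x),\, f(x)\Pi_n\vec p(x)\rangle_{\ell^2}$, so that
\[
	G_n[f](x)-f(x)=\frac{1}{K_n(x,x)}\big\langle \Pi_n\vec q(x),\,\big(f(J)-f(x)\big)\Pi_n\vec p(x)\big\rangle_{\ell^2}.
\]

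Next I would observe that \eqref{eq:2a} says precisely $J\vec p(x)=x\vec p(x)$ componentwise (each component is a finite sum because $J\in\Hes$), hence $f(J)\vec p(x)=f(x)\vec p(x)$. Consequently $f(x)\Pi_n\vec p(x)=\Pi_n f(J)\vec p(x)$, so the bulk of the product cancels exactly and only the components that the truncation $\Pi_n$ has ``leaked'' across level $n$ survive. Carrying this out yields the boundary formula
\[
	G_n[f](x)-f(x)=-\frac{1}{K_n(x,x)}\sum_{\ell<n}\ \sum_{\ell'\ge n} q_\ell(x)\,[f(J)]_{\ell,\ell'}\,p_{\ell'}(x).
\]

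The last step is to show that this boundary term is small. Because $J\in\Hes$, each power $J^k$ vanishes above its $k$-th superdiagonal, so $[f(J)]_{\ell,\ell'}=0$ whenever $\ell'>\ell+d$; together with $\ell<n\le\ell'$ this confines the double sum to $\ell\in[n-d,n)$ and $\ell'\in[n,n+d)$ --- at most $d^2$ terms, all with $(\ell,\ell')$ lying in the fixed diagonal band $D_{2d}$. Each power $J^k$ inherits \eqref{eq:NDB} from $J$ by Proposition~\ref{prop:3}, and therefore so does the finite linear combination $f(J)$; this furnishes a finite constant $C_f:=\sup_{(\ell,\ell')\in D_{2d}}|[f(J)]_{\ell,\ell'}|$. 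Using $K_n(x,x)>0$ we then get
\[
	\big|G_n[f](x)-f(x)\big|\le C_f\sum_{\ell=n-d}^{n-1}\ \sum_{\ell'=n}^{n+d-1}\frac{|q_\ell(x)|\,|p_{\ell'}(x)|}{K_n(x,x)} ,
\]
and each of these boundedly many ratios, having both indices within distance $d$ of $n$, tends to $0$ by the decay assumption (c) (with $N=d$). Letting $n\to\infty$ gives $G_n[f](x)\to f(x)$.

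I expect the main obstacle to be the honest bookkeeping in the second step: the non-symmetry of the kernel means $\vec p(x)$ and $\vec q(x)$ play genuinely different roles, and one must track precisely which truncated components remain after invoking $f(J)\vec p(x)=f(x)\vec p(x)$. The payoff is that the eigenvector identity turns the entire bulk into exact cancellation, leaving only a band of width $O(\deg f)$ around the index $n$; there the near-diagonal boundedness \eqref{eq:NDB} supplies uniform control of the matrix entries while assumption (c) supplies the decay of the polynomial products, exactly as \eqref{eq:51} does in the classical case $r=1$.
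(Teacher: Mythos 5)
Your argument is correct and is essentially the paper's own proof: the paper reduces to monomials $f(y)=y^k$ and computes the same commutator term $\big\langle \Pi_n\vec q(x),\,(J^k\Pi_n-\Pi_n J^k)\vec p(x)\big\rangle_{\ell^2}$ via Lemma~\ref{lem:1} and the eigenvector identity $J\vec p(x)=x\vec p(x)$, arriving at exactly your boundary sum over $\ell\in[n-k,n)$, $\ell'\in[n,\ell+k]$, which is then controlled by Proposition~\ref{prop:3} and hypothesis (c). The only (shared) caveat is that both you and the paper apply (c) with $q$ carrying the index below $n$ and $p$ the index at or above $n$, i.e.\ with the two index ranges interchanged relative to the literal wording of the hypothesis.
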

\begin{proof}
By linearity it is enough to prove the convergence for $f(x) = x^k$ for any $k \geq 0$.
To do so, let us recall that $J \vec{p}(x) = x \vec{p}(x)$. Thus
\[
	x^k K_n(x,x) 
	=
	\big\langle
		\Pi_n \vec{q}(x),
		\Pi_n x^k \vec{p}(x)
	\big\rangle_{\ell^2}
	=
	\big\langle
		\Pi_n \vec{q}(x),
		\Pi_n J^k \vec{p}(x)
	\big\rangle_{\ell^2}.
\]
Hence, in view of Lemma~\ref{lem:1}, we need to estimate
\[
	\int_\RR K_n(x,y) y^k K_n(y,x) \ud \mu(y) - x^k K_n(x,x) =
	\big\langle
		\Pi_n \vec{q}(x),
		(J^k \Pi_n - \Pi_n J^k) \vec{p}(x)
	\big\rangle_{\ell^2}.
\]
Let us observe that since $J \in \Hes$ we have $[J^k]_{\ell, \ell'} = 0$ for $\ell' > \ell+k$.
Thus,
\begin{align*}
	\big\langle
		\Pi_n \vec{q}(x),
		J^k \Pi_n \vec{p}(x)
	\big\rangle_{\ell^2}
	&=
	\sum_{\ell=0}^{n-1} q_{\ell}(x) \sum_{\ell'=0}^{n-1} [J^k]_{\ell, \ell'} p_{\ell'}(x) \\
	&= 
	\sum_{\ell=0}^{n-1} q_{\ell}(x) \sum_{\ell'=0}^{\min(n-1,\ell+k)} [J^k]_{\ell, \ell'} p_{\ell'}(x),
\end{align*}
and
\[
	\big\langle
		\Pi_n \vec{q}(x),
		\Pi_n J^k \vec{p}(x)
	\big\rangle_{\ell^2} 
	=
	\sum_{\ell=0}^{n-1} q_\ell(x) \sum_{\ell'=0}^{\ell+k} [J^k]_{\ell, \ell'} p_{\ell'}(x).
\]
Hence
\[
	\big\langle
		\Pi_n \vec{q}(x),
		(J^k \Pi_n - \Pi_n J^k) \vec{p}(x)
	\big\rangle_{\ell^2}
	=
	-\sum_{\ell=n-k}^{n-1} q_{\ell}(x) \sum_{\ell'=n}^{\ell+k} [J^k]_{\ell, \ell'} p_{\ell'}(x).
\]
Now, in view of Proposition~\ref{prop:3}, the result easily follows.
\end{proof}

If in the setup of Theorem~\ref{thm:5} we additionally assume that $\supp(\mu)$ is compact and
\begin{equation} \label{eq:39}
	\sup_{n \geq 0} 
	\frac{1}{K_n(x,x)} 
	\int_{\RR} |K_n(x,y) K_n(y,x)| \ud \mu(y) < \infty,
\end{equation}
then by a similar reasoning as in the proof of Corollary~\ref{cor:1} we obtain that \eqref{eq:38}
holds for any $f \in \calC_b(\RR)$. Note that by the reproducing property we have
\begin{equation} \label{eq:41}
	\int_{\RR} K_n(x,y) K_n(y,x) \ud \mu(y) = K_n(x,x).
\end{equation}
Consequently,
\[
	\frac{1}{K_n(x,x)} \int_{\RR} K_n(x,y) K_n(y,x) \ud \mu(y) \equiv 1.
\]
Hence \eqref{eq:39} surely holds if
\begin{equation} \label{eq:40}
	K_n(x,y) K_n(y,x) \geq 0 \quad \text{for a.e. } y \in \supp(\mu), \ n \geq 0.
\end{equation}
However, numerical experiments show that \eqref{eq:40} is \emph{not} satisfied for
the Jacobi-Pi\~{n}eiro polynomials (at least for some choice of parameters). However, it seems that \eqref{eq:39} can still be satisfied. 
So we state the following problem.

\begin{problem} \label{prob:2}
Formulate a criterion which implies that \eqref{eq:39} holds true.
\end{problem}

Let us observe that \eqref{eq:40} together with \eqref{eq:41} implies the positivity of $K_n(x,x)$.

\section{Examples} \label{sec:8}
In this section we shall assume that the sequence of multi-indices from \eqref{eq:24} satisfies
\begin{equation} \label{eq:29a}
	\lim_{\ell \to \infty} 
	\frac{\vec{n}_\ell}{|\vec{n}_\ell|} = 
	(s_1, s_2, \ldots, s_r),
\end{equation}
where
\begin{equation} \label{eq:29b}
	\sum_{i=1}^r s_i = 1 \quad \text{and} \quad s_i > 0.
\end{equation}
This means that the multi-indices $\vec{n}_\ell$ tend to infinity in $\mathbb{N}^r$ in the direction $(s_1,s_2,\ldots,s_r)$.
Next, we shall show that our results can be applied for some well-known systems of multiple
orthogonal polynomials on the real line. 

\subsection{Angelesco systems}
An Angelesco system $\vec{\mu}=(\mu_1, \ldots, \mu_r)$ consists of $r$ measures such that
the convex hull of the support of each measure $\mu_i$ is a compact interval $\Delta_i \subset \RR$
and these intervals are pairwise disjoint. It is a basic fact that
Angelesco systems are perfect provided each $\supp(\mu_i)$ contains infinitely many points
(see, e.g., \cite[Section 23.1.1]{Ismail2009}).

In what follows, we need some concepts from potential theory.
Namely, for positive measures $\eta, \nu$ with \emph{compact} supports, we define their mutual energy by
\begin{equation} \label{eq:30}
	I(\eta,\nu) = \int_\CC \int_\CC \log \frac{1}{|z-w|} \ud \eta(z) \ud \nu(w).
\end{equation}
It was proven in \cite{Gonchar1981} (see also \cite[Chapter 5.6]{Nikishin1991}) that under the assumption that\footnote{For any measure $\mu$ we write $\ud \mu = \mu'(x) \ud x + \ud \mu_{\mathrm{s}}$, where $\mu_{\mathrm{s}}$ is singular with respect to the Lebesgue measure.}
\begin{equation} \label{eq:31}
	\mu'_i(x) > 0, \quad a.e.\ x \in \Delta_i,\ i=1,\ldots,r ,
\end{equation}
there exists a unique minimizer $\vec{\omega} = (\omega_1, \ldots, \omega_r)$ of
\begin{equation} \label{eq:32}
	E(\eta_1, \ldots, \eta_r) = 
	\sum_{j=1}^r I(\eta_j, \eta_j) + 
	\sum_{j=1}^{r-1} \sum_{k=j+1}^r I(\eta_j, \eta_k)
\end{equation}
under the constraint
\begin{equation} \label{eq:33}
	\supp(\eta_i) \subset \Delta_i \quad \text{and} \quad
	\eta_i(\Delta_i) = s_i, \qquad i=1,\ldots,r.
\end{equation}
Moreover, $\nu_n \xrightarrow{w} \nu_\infty$, where $\nu_\infty = \omega_1 + \ldots + \omega_r$.

\begin{theorem}
Let $(\vec{n}_\ell : \ell \geq 0)$ be a path satisfying \eqref{eq:24}, \eqref{eq:29a} and \eqref{eq:29b}.
Let $\vec{\mu}$ be an Angelesco system of measures on the real line satisfying \eqref{eq:31}.
Set $\mu = \mu_1+\ldots+\mu_r$. Then
\[
	\frac{1}{n} K_n(x,x) \ud \mu(x) \xrightarrow{w} \nu_\infty,
\]
where $\nu_\infty = \omega_1 + \ldots + \omega_r$ and $\vec{\omega}$ is the unique minimizer of the problem
\eqref{eq:32} under the constraint \eqref{eq:33}.
\end{theorem}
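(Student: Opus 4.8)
The plan is to derive the conclusion directly from Corollary~\ref{cor:1}, applied to the full sequence $n_k = k$, combined with the Angelesco zero-distribution result quoted just above the statement. Recall that Corollary~\ref{cor:1} gives the equivalence $\nu_n \xrightarrow{w} \nu_\infty \Leftrightarrow \eta_n \xrightarrow{w} \nu_\infty$, where $\ud\eta_n = \tfrac{1}{n} K_n(x,x)\,\ud\mu$ is precisely the measure in the statement. Since the result of Gonchar \cite{Gonchar1981} asserts, under \eqref{eq:31}, that $\nu_n \xrightarrow{w} \nu_\infty$ with $\nu_\infty = \omega_1 + \ldots + \omega_r$ the unique minimizer of \eqref{eq:32} under \eqref{eq:33}, it suffices to verify the three hypotheses of Corollary~\ref{cor:1}: the near-diagonal boundedness \eqref{eq:NDB} of $J$, the existence of a common compact set $K$ containing $\supp(\mu)$ and every $\supp(\nu_n)$, and the uniform bound \eqref{eq:4}.

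First I would fix $K = \Delta_1 \cup \ldots \cup \Delta_r$, which is compact. Since $\supp(\mu) = \bigcup_i \supp(\mu_i) \subset \bigcup_i \Delta_i$ and, by the defining property of Angelesco systems, the monic type II polynomial $P_{\vec{n}_\ell}$ has exactly $(\vec{n}_\ell)_i$ simple zeros in the interior of each $\Delta_i$, every $\supp(\nu_n)$ lies in $K$ as well. This also settles hypothesis (a) of Theorem~\ref{thm:3}: $p_n = P_{\vec{n}_n}$ has $n$ simple zeros, all contained in the compact set $\bigcup_i \Delta_i$.

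Next I would establish \eqref{eq:NDB} through Theorem~\ref{thm:3}, for which it remains to check hypothesis (b), the interlacing of the zeros of the consecutive polynomials $P_{\vec{n}_n}$ and $P_{\vec{n}_n + \vec{e}_{i_n}}$ along the path. This is the genuinely Angelesco-specific input, and it is exactly the kind of condition recorded in \cite{Haneczok2012}; alternatively, \eqref{eq:NDB} can be obtained from Proposition~\ref{prop:4} once one knows that the nearest-neighbour recurrence coefficients of a compactly supported Angelesco system are bounded. With \eqref{eq:NDB} in hand, the hypotheses of Theorem~\ref{thm:4} hold. Finally, for \eqref{eq:4} I would appeal to positivity of $K_n(x,x)$: by Lemma~\ref{lem:2} it is enough to verify the determinantal condition \eqref{eq:detPOS}, and by \cite{Kuijlaars2010} the Christoffel--Darboux kernel of an Angelesco system is the correlation kernel of a determinantal point process, so \eqref{eq:detPOS} holds. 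Hence $K_n(x,x) \geq 0$, and together with the normalization $\tfrac{1}{n}\int_\RR K_n(x,x)\,\ud\mu = 1$ this yields \eqref{eq:4}. Applying Corollary~\ref{cor:1} with $n_k = k$ then gives \eqref{eq:5}, and combining it with $\nu_n \xrightarrow{w}\nu_\infty$ produces the claim.

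I expect the verification of \eqref{eq:NDB} to be the main obstacle: one must supply the interlacing statement (b) for consecutive type II polynomials along an \emph{arbitrary} path $(\vec{n}_\ell)$ satisfying \eqref{eq:24}, which is precisely where the geometric structure of the disjoint intervals $\Delta_i$ enters, rather than just along the stepline. The second delicate point is justifying positivity of the kernel through its determinantal-process interpretation; this is more routine but still requires invoking the structural result of \cite{Kuijlaars2010} for Angelesco systems.
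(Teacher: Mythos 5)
Your proposal is correct and follows essentially the same route as the paper: verify the hypotheses of Corollary~\ref{cor:1} for $n_k=k$ (taking $K=\bigcup_{i=1}^r\Delta_i$, locating $\supp(\mu)$ and $\supp(\nu_n)$ in $K$, and obtaining \eqref{eq:4} from \eqref{eq:detPOS} via Lemma~\ref{lem:2} and \cite[Section 4.2]{Kuijlaars2010}), then combine \eqref{eq:5} with the Gonchar--Rakhmanov limit $\nu_n \xrightarrow{w} \nu_\infty$. The only divergence is in how \eqref{eq:NDB} is checked: the paper uses what you call the alternative route, namely Proposition~\ref{prop:4} together with \cite[Remark A.11]{Aptekarev2020} for the boundedness of the nearest-neighbour recurrence coefficients of an Angelesco system, rather than the interlacing criterion of Theorem~\ref{thm:3} (which the paper reserves for AT systems), so the obstacle you flag is discharged by an existing reference rather than by a new interlacing argument along the path.
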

\begin{proof}
We only need to verify the hypotheses of Corollary~\ref{cor:1}.

First of all, in view of \cite[Remark A.11]{Aptekarev2020} the hypotheses of Proposition~\ref{prop:4} are satisfied.
Hence, the corresponding matrix $J$ satisfies \eqref{eq:NDB}. Next, $\supp(\mu) \subset \bigcup_{i=1}^r \Delta_i$ is compact 
and by \cite[Theorem 23.1.3]{Ismail2009} $\supp(\nu_n) \subset \bigcup_{i=1}^r \Delta_i$. Next, by \cite[Section 4.2]{Kuijlaars2010} one has that \eqref{eq:detPOS} is satisfied. Hence, by Lemma~\ref{lem:2}, the condition \eqref{eq:4} is satisfied. Hence the conclusion follows from Corollary~\ref{cor:1}.
\end{proof}

\subsection{AT systems}
Let $\vec{\mu} = (\mu_1,\ldots,\mu_r)$ be a vector of $r$ measures which are absolutely continuous with respect to a fixed measure $\mu$
on some compact interval $[a,b]$. Let $\ud \mu_i(x) = w_j(x) \ud \mu(x)$. Then $\mu$ is an \emph{algebraic Chebyshev system} (or an AT system for short) if for any multi-index $\vec{n} \in \NN_0^r$ the set
\[
	\Phi_{\vec{n}} = \bigcup_{i=1}^r \bigcup_{k=0}^{n_i - 1} \{ x^k w_i \}
\]
is a Chebyshev system on $[a,b]$, which means that any non-trivial linear combination of 
functions from $\Phi_{\vec{n}}$ has at most $|\vec{n}|-1$ zeros on $[a,b]$. An important result (see \cite[Theorem 23.1.4]{Ismail2009}) states that every AT system is perfect.

\begin{proposition} \label{prop:5}
Let $(\vec{n}_\ell : \ell \geq 0)$ be a path satisfying \eqref{eq:24}, \eqref{eq:29a} and \eqref{eq:29b}.
Let $\vec{\mu}$ be an AT system such that $\ud \mu_i(x) = w_j(x) \ud \mu(x)$ and the functions $w_1, \ldots, w_r$ are continuous. Then the weak limits of the sequences $(\nu_n : n \in \NN_0)$ and $(\eta_n : n \in \NN_0)$ are the same, i.e. \eqref{eq:5} is satisfied.
\end{proposition}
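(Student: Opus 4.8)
The plan is to verify the hypotheses of Corollary~\ref{cor:1} for AT systems, mirroring the structure of the preceding proof for Angelesco systems. There are three things to check: that the matrix $J$ satisfies \eqref{eq:NDB}, that the supports of $\mu$ and of the zero-counting measures $\nu_n$ lie in a common compact set, and that the uniform integrability bound \eqref{eq:4} holds. The compactness is immediate here since by hypothesis all measures live on the fixed compact interval $[a,b]$, and for AT systems the zeros of $p_n=P_{\vec{n}_n}$ are known to lie in the convex hull of $\supp(\mu)\subset[a,b]$, so $\supp(\nu_n)\subset[a,b]$ for all $n$.

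For \eqref{eq:NDB} I would invoke Theorem~\ref{thm:3} rather than Proposition~\ref{prop:4}. The two structural facts that Theorem~\ref{thm:3} requires are exactly the classical properties of type II multiple orthogonal polynomials in an AT system: for such systems $p_n$ has precisely $n$ simple zeros, all located in the interior of $[a,b]$, and consecutive polynomials $p_n, p_{n+1}$ along the path have interlacing zeros. These are standard consequences of the Chebyshev (AT) property---the interlacing in particular follows because along the stepwise path \eqref{eq:24} the polynomials form an ordered sequence whose zeros separate---and can be cited from \cite[Chapter 23]{Ismail2009}. Granting these, Theorem~\ref{thm:3} gives \eqref{eq:NDB} directly with $\Delta=[a,b]$.

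The remaining and genuinely substantive point is the positivity/integrability condition \eqref{eq:4}, for which I would again route through Lemma~\ref{lem:2} by establishing \eqref{eq:detPOS}. The key input is Kuijlaars' analysis: \cite[Section 4]{Kuijlaars2010} shows that for AT systems with a suitable sign structure the Christoffel--Darboux kernel $K_n$ is the correlation kernel of a determinantal point process, whence the determinants $\det[K_n(x_i,x_j)]$ are nonnegative. This is where the continuity assumption on the weights $w_1,\ldots,w_r$ enters, ensuring the relevant biorthogonal ensemble is well-defined and its kernel has the determinantal positivity. Invoking Lemma~\ref{lem:2} then yields $K_n(x,x)\geq 0$ on $\supp(\mu)$, and since $\tfrac1n\int K_n(x,x)\ud\mu=1$ the bound \eqref{eq:4} follows.

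With all three hypotheses of Corollary~\ref{cor:1} in place, the equivalence \eqref{eq:5} is immediate, giving that $(\nu_n)$ and $(\eta_n)$ share the same weak limits. The main obstacle I anticipate is not in the logical chain but in correctly citing and applying the determinantal positivity \eqref{eq:detPOS} for general AT systems with $r>1$: unlike the Angelesco case, where \cite[Section 4.2]{Kuijlaars2010} gives it cleanly, the AT case may require the continuity of the $w_j$ and possibly an additional sign convention on the weights, so the precise statement from \cite{Kuijlaars2010} must be matched carefully to the hypotheses stated here.
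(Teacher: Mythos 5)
Your proposal is correct and follows essentially the same route as the paper: verify the hypotheses of Corollary~\ref{cor:1} by obtaining \eqref{eq:NDB} from Theorem~\ref{thm:3} (simple zeros in a fixed compact interval plus interlacing along the path, which the paper sources from \cite[Theorem 23.1.4]{Ismail2009} together with \cite[Theorem 2.1]{Haneczok2012}) and obtaining \eqref{eq:4} from \eqref{eq:detPOS} via \cite[Section 4.3]{Kuijlaars2010} and Lemma~\ref{lem:2}. The only differences are in the precise citations for the interlacing property and for the determinantal positivity, not in the argument itself.
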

\begin{proof}
We only need to verify the hypotheses of Corollary~\ref{cor:1}.

First of all, in view of \cite[Theorem 23.1.4]{Ismail2009} and \cite[Theorem 2.1]{Haneczok2012} the hypotheses of Theorem~\ref{thm:3} are satisfied. Hence, the corresponding matrix $J$ satisfies \eqref{eq:NDB}. 
Next, by \cite[Section 4.3]{Kuijlaars2010} one has that \eqref{eq:detPOS} is satisfied. Hence, by Lemma~\ref{lem:2}, the condition \eqref{eq:4} is satisfied. Hence the conclusion follows from Corollary~\ref{cor:1}. 
\end{proof}

Proposition~\ref{prop:5} can be applied to Jacobi-Pi\~{n}eiro polynomials (see \cite[Chapter 23.3.2]{Ismail2009} for more details). The description of the limiting distribution of $(\nu_n : n \in \NN)$ for a very special choice of the path $(\vec{n}_\ell : \ell \geq 0)$ has been considered in \cite{Neuschel2016, Coussement2008}.

\subsubsection{Nikishin systems}
Let $\vec{\sigma} = (\sigma_1, \ldots, \sigma_r)$ be a vector of finite measures such that the convex hull of of support of each measure $\sigma_i$ is a compact interval $\Delta_i \subset \RR$ and 
\[
	\Delta_j \cap \Delta_{j+1} = \emptyset, \quad j=1,\ldots,r-1.
\]
For two measures $\eta, \nu$ let us define a measure $\langle \eta, \mu \rangle$ by
\begin{equation} \label{eq:36}
	\ud \langle	\eta, \mu \rangle (x) = \calC(\mu)(x) \ud \eta(x),
\end{equation}
where for any measure $\mu$ on the real line its Cauchy transform is defined by
\begin{equation} \label{eq:35}
	\calC(\mu)(z) = \int_\RR \frac{\ud \mu(y)}{z-y}, \quad z \notin \supp(\mu).
\end{equation}
Then we say that $\vec{\mu} = (\mu_1,\ldots,\mu_r)$ is a Nikishin system generated by $\vec{\sigma}$ if
\begin{equation} \label{eq:34}
	\mu_1 = \sigma_1, \quad 
	\mu_2 = \langle \sigma_1, \sigma_2 \rangle, \quad \ldots \quad
	\mu_r = \big\langle \sigma_1, \langle \sigma_2, \ldots, \sigma_r \rangle \big\rangle.
\end{equation}
It has been proven in \cite{Fidalgo2011} that any Nikishin system is an AT system.

It was proven in \cite[Chapter 5.7]{Nikishin1991} that under the assumption that
\begin{equation} \label{eq:31'}
	\sigma'_i(x) > 0, \quad a.e.\ x \in \Delta_i,\ i=1,\ldots,r ,
\end{equation}
there exists a unique minimizer $\vec{\omega} = (\omega_1, \ldots, \omega_r)$ of
\begin{equation} \label{eq:32'}
	E(\eta_1, \ldots, \eta_r) = 
	\sum_{j=1}^r I(\eta_j, \eta_j) - 
	\sum_{j=1}^{r-1} I(\eta_j, \eta_{j+1})
\end{equation}
under the constraint
\begin{equation} \label{eq:33'}
	\supp(\eta_i) \subset \Delta_i \quad \text{and} \quad
	\eta_i(\Delta_i) = \sum_{j=i}^r s_j, \qquad i=1,\ldots,r.
\end{equation}
Moreover, $\nu_n \xrightarrow{w} \nu_\infty$, where $\nu_\infty = \omega_1$.

\begin{theorem}
Let $(\vec{n}_\ell : \ell \geq 0)$ be a path satisfying \eqref{eq:24}, \eqref{eq:29a} and \eqref{eq:29b}.
Let $\vec{\mu}$ be a Nikishin system generated by $\vec{\sigma}$ satisfying \eqref{eq:31'}.
Set $\mu = \mu_1$. Then
\[
	\frac{1}{n} K_n(x,x) \ud \mu(x) \xrightarrow{w} \nu_\infty,
\]
where $\nu_\infty = \omega_1$ and $\vec{\omega}$ is the unique minimizer of the problem
\eqref{eq:32'} under the constraint \eqref{eq:33'}.
\end{theorem}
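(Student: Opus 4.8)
The plan is to reduce the statement to the AT-system case treated in Proposition~\ref{prop:5}, and then to quote the classical zero-distribution result for Nikishin systems.

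First I would recall that, by \cite{Fidalgo2011}, every Nikishin system is an AT system; in particular $\vec{\mu}$ is perfect and the structural hypotheses underlying Proposition~\ref{prop:5} are available. The only additional assumption in Proposition~\ref{prop:5} that needs checking is the continuity of the weights $w_1,\ldots,w_r$ of the measures $\mu_j$ relative to $\mu=\mu_1=\sigma_1$. Here $w_1\equiv 1$ is trivially continuous, while for $j\geq 2$ the definition \eqref{eq:34} expresses $w_j$, on $\supp(\sigma_1)\subset\Delta_1$, as the Cauchy transform \eqref{eq:35} of a finite measure whose support is contained in $\Delta_2$. Since $\Delta_1\cap\Delta_2=\emptyset$, these two intervals are a positive distance apart, so the kernel $1/(x-y)$ stays bounded away from its singularity as $x$ ranges over $\Delta_1$; hence each such Cauchy transform is real-analytic, in particular continuous, on a neighbourhood of $\Delta_1$. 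Thus all the weights are continuous on $\supp(\mu)$.

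With the continuity of the weights in hand, Proposition~\ref{prop:5} applies and yields the equivalence \eqref{eq:5}: the sequences $(\nu_n)$ and $(\eta_n)$ possess the same weak accumulation points. It therefore remains only to identify the weak limit of $(\nu_n)$. This is precisely the potential-theoretic result recalled above from \cite[Chapter 5.7]{Nikishin1991}: under hypothesis \eqref{eq:31'} one has $\nu_n\xrightarrow{w}\nu_\infty=\omega_1$, where $\vec{\omega}$ is the unique minimizer of the energy functional \eqref{eq:32'} under the constraint \eqref{eq:33'}. Combining this convergence with \eqref{eq:5} gives $\eta_n\xrightarrow{w}\nu_\infty$, which is exactly the asserted weak convergence $\tfrac{1}{n}K_n(x,x)\,\ud\mu(x)\xrightarrow{w}\nu_\infty$.

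The only step that is not a direct citation is the verification that the iterated Cauchy-transform weights are continuous on $\supp(\mu)$, and I expect this to be the main --- though quite mild --- obstacle. It rests entirely on the disjointness $\Delta_j\cap\Delta_{j+1}=\emptyset$ built into the Nikishin construction, which is what keeps each Cauchy kernel nonsingular over the relevant interval. Everything else follows by directly invoking Proposition~\ref{prop:5} together with the known description of $\nu_\infty$.
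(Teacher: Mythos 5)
Your proposal is correct and follows essentially the same route as the paper: verify continuity of the densities $w_j$ (which the paper also deduces from \eqref{eq:34}, \eqref{eq:36} and \eqref{eq:35}), invoke Proposition~\ref{prop:5}, and combine with the known convergence $\nu_n \xrightarrow{w} \nu_\infty$ from \cite[Chapter 5.7]{Nikishin1991}. Your explicit justification that the iterated Cauchy transforms are continuous on $\Delta_1$ via the disjointness $\Delta_j \cap \Delta_{j+1} = \emptyset$ is merely a fuller account of a step the paper states without elaboration.
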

\begin{proof}
Observe that by \eqref{eq:34}, \eqref{eq:36} and \eqref{eq:35} the densities of $\mu_i$ with respect to $\mu_1$
are continuous functions. Moreover, by the discussion above $\nu_n \xrightarrow{w} \nu_\infty$.
Hence, by Proposition~\ref{prop:5} the result follows.
\end{proof}

Let us mention that in \cite{Gonchar1997} (see also \cite{Aptekarev2010}) a generalisation of both Angelesco and Nikishin systems has been proposed. For such systems it is known that $\nu_n \xrightarrow{w} \nu_\infty$ for some measure $\nu_\infty$ which also comes from a minimalization problem. It is quite possible that our Corollary~\ref{cor:1} can be applied also here.

\subsection*{Acknowledgment}
The authors would like to thank Guilherme Silva for informing us about \cite{Hardy2015} and anonymous referees for useful suggestions.
The first author was supported by the Methusalem grant \textit{Classification, symmetries and singularities at the frontiers of algebra, analysis and geometry} of the Flemish Government. The second author was supported by FWO grant G0C9819N of the Research Foundation -- Flanders.

\begin{bibliography}{CD-MOP.bib}
	\bibliographystyle{amsplain}

\providecommand{\bysame}{\leavevmode\hbox to3em{\hrulefill}\thinspace}
\providecommand{\MR}{\relax\ifhmode\unskip\space\fi MR }
% \MRhref is called by the amsart/book/proc definition of \MR.
\providecommand{\MRhref}[2]{%
  \href{http://www.ams.org/mathscinet-getitem?mr=#1}{#2}
}
\providecommand{\href}[2]{#2}
\begin{thebibliography}{10}

\bibitem{Aptekarev2020}
A.I. Aptekarev, S.A. Denisov, and M.L. Yattselev, \emph{Self-adjoint {J}acobi
  matrices on trees and multiple orthogonal polynomials}, Trans. Amer. Math.
  Soc. \textbf{373} (2020), no.~2, 875--917.

\bibitem{Aptekarev2006}
A.I. Aptekarev, V.~Kalyagin, G.~L\'{o}pez~Lagomasino, and I.A. Rocha, \emph{On
  the limit behavior of recurrence coefficients for multiple orthogonal
  polynomials}, J. Approx. Theory \textbf{139} (2006), no.~1-2, 346--370.

\bibitem{Aptekarev2010}
A.I. Aptekarev and V.G. Lysov, \emph{Systems of {M}arkov functions generated by
  graphs and the asymptotics of their {H}ermite-{P}ad\'{e} approximants}, Mat.
  Sb. \textbf{201} (2010), no.~2, 29--78, translated in Sb. Math. \textbf{202}
  (2011), no. 1--2, 155--206.

\bibitem{Bleher2004}
P.~Bleher and A.B.J. Kuijlaars, \emph{Large {$n$} limit of {G}aussian random
  matrices with external source. {I}}, Comm. Math. Phys. \textbf{252} (2004),
  no.~1-3, 43--76.

\bibitem{Breuer2014}
J.~Breuer and M.~Duits, \emph{The {N}evai condition and a local law of large
  numbers for orthogonal polynomial ensembles}, Adv. Math. \textbf{265} (2014),
  441--484.

\bibitem{Breuer2010a}
J.~Breuer, Y.~Last, and B.~Simon, \emph{The {N}evai condition}, Constr. Approx.
  \textbf{32} (2010), no.~2, 221--254.

\bibitem{Coussement2008}
E.~Coussement, J.~Coussement, and W.~Van~Assche, \emph{Asymptotic zero
  distribution for a class of multiple orthogonal polynomials}, Trans. Amer.
  Math. Soc. \textbf{360} (2008), no.~10, 5571--5588.

\bibitem{Coussement2005}
J.~Coussement and W.~Van~Assche, \emph{Gaussian quadrature for multiple
  orthogonal polynomials}, J. Comput. Appl. Math. \textbf{178} (2005), no.~1-2,
  131--145.

\bibitem{Denisov2022}
S.A. Denisov and M.L. Yattselev, \emph{Spectral theory of {J}acobi matrices on
  trees whose coefficients are generated by multiple orthogonality}, Adv. Math.
  \textbf{396} (2022), Paper No. 108114.

\bibitem{Duits2021}
M.~Duits, B.~Fahs, and R.~Kozhan, \emph{Global fluctuations for multiple
  orthogonal polynomial ensembles}, J. Funct. Anal. \textbf{281} (2021), no.~5,
  Paper No. 109062, 49.

\bibitem{Fidalgo2011}
U.~Fidalgo~Prieto and G.~L\'{o}pez~Lagomasino, \emph{Nikishin systems are
  perfect}, Constr. Approx. \textbf{34} (2011), no.~3, 297--356.

\bibitem{Gonchar1981}
A.A. Gonchar and E.A. Rakhmanov, \emph{On the convergence of simultaneous
  {P}ad\'{e} approximants for systems of functions of {M}arkov type}, Trudy
  Mat. Inst. Steklov. \textbf{157} (1981), 31--48, 234, Number theory,
  mathematical analysis and their applications.

\bibitem{Gonchar1997}
A.A. Gonchar, E.A. Rakhmanov, and V.N. Sorokin, \emph{On {H}ermite-{P}ad\'{e}
  approximants for systems of functions of {M}arkov type}, Mat. Sb.
  \textbf{188} (1997), no.~5, 33--58, translated in Sb. Math. \textbf{188}
  (1997), no. 5, 671--696.

\bibitem{Haneczok2012}
M.~Haneczok and W.~Van~Assche, \emph{Interlacing properties of zeros of
  multiple orthogonal polynomials}, J. Math. Anal. Appl. \textbf{389} (2012),
  no.~1, 429--438.

\bibitem{Hardy2018}
A.~Hardy, \emph{Polynomial ensembles and recurrence coefficients}, Constr.
  Approx.. \textbf{48} (2018), no.~1, 137--162.

\bibitem{Hardy2015}
\bysame, \emph{Average characteristic polynomials of determinantal point
  processes}, Ann. Inst. H. Poincar\'e Probab. Statist. \textbf{51} (February
  2015), no.~1, 283--303.

\bibitem{Inglese1995}
G.~Inglese, \emph{Christoffel functions and finite moment problems}, Inverse
  Problems \textbf{11} (1995), no.~4, 949--960.

\bibitem{Ismail2009}
M.E.H. Ismail, \emph{Classical and quantum orthogonal polynomials in one
  variable}, Encyclopedia of {M}athematics and its {A}pplications, vol.~98,
  Cambridge University Press, Cambridge, 2009.

\bibitem{KieKuijlStiv}
M.~Kieburg, A.B.J. Kuijlaars, and D.~Stivigny, \emph{Singular value statistics
  of matrix products with truncated unitary matrices}, Int. Math. Res. Not.
  IMRN (2016), no.~11, 3392--3424.

\bibitem{Konig2005}
W.~K{\"o}nig, \emph{Orthogonal polynomial ensembles in probability theory},
  Probab. Surv. \textbf{2} (2005), 385--447.

\bibitem{Kuijlaars2010}
A.B.J. Kuijlaars, \emph{Multiple orthogonal polynomial ensembles}, Recent
  trends in orthogonal polynomials and approximation theory, Contemp. Math.,
  vol. 507, Amer. Math. Soc., Providence, RI, 2010, pp.~155--176.

\bibitem{KuijlZhang}
A.B.J. Kuijlaars and L.~Zhang, \emph{Singular values of products of {G}inibre
  random matrices, multiple orthogonal polynomials and hard edge scaling
  limits}, Comm. Math. Phys. \textbf{332} (2014), no.~2, 759--781.

\bibitem{Lubinsky2009}
D.S. Lubinsky, \emph{A new approach to universality limits involving orthogonal
  polynomials}, Ann. of Math. (2) \textbf{170} (2009), no.~2, 915--939.

\bibitem{Lubinsky2011}
\bysame, \emph{A maximal function approach to {C}hristoffel functions and
  {N}evai's operators}, Constr. Approx. \textbf{34} (2011), no.~3, 357--369.

\bibitem{Lubinsky2016}
\bysame, \emph{An update on local universality limits for correlation functions
  generated by unitary ensembles}, SIGMA Symmetry Integrability Geom. Methods
  Appl. \textbf{12} (2016), Paper No. 078, 36.

\bibitem{LubWVA}
D.S. Lubinsky and W.~Van~Assche, \emph{Simultaneous {G}aussian quadrature for
  {A}ngelesco systems}, Jaen J. Approx. \textbf{8} (2016), no.~2, 113--149.

\bibitem{Mate1991}
A.~M\'{a}t\'{e}, P.~Nevai, and V.~Totik, \emph{Szeg{\H{o}}'s extremum problem
  on the unit circle}, Ann. of Math. (2) \textbf{134} (1991), no.~2, 433--453.

\bibitem{Neuschel2016}
T.~Neuschel and W.~Van~Assche, \emph{Asymptotic zero distribution of
  {J}acobi-{P}i\~{n}eiro and multiple {L}aguerre polynomials}, J. Approx.
  Theory \textbf{205} (2016), 114--132.

\bibitem{Nevai1979}
P.~Nevai, \emph{Orthogonal {P}olynomials}, vol.~18, Mem. Amer. Math. Soc., no.
  213, American Mathematical Society, 1979.

\bibitem{Nevai1986}
\bysame, \emph{G\'{e}za {F}reud, orthogonal polynomials and {C}hristoffel
  functions. {A} case study}, J. Approx. Theory \textbf{48} (1986), no.~1,
  3--167.

\bibitem{Nikishin1991}
E.M. Nikishin and V.N. Sorokin, \emph{Rational approximations and
  orthogonality}, Translations of Mathematical Monographs, vol.~92, American
  Mathematical Society, Providence, RI, 1991.

\bibitem{Shohat1943}
J.A. Shohat and J.D. Tamarkin, \emph{The {P}roblem of {M}oments}, American
  Mathematical Society Mathematical Surveys, Vol. I, American Mathematical
  Society, New York, 1943.

\bibitem{Simon2007}
B.~Simon, \emph{Equilibrium measures and capacities in spectral theory},
  Inverse Probl. Imaging \textbf{1} (2007), no.~4, 713--772.

\bibitem{Simon2008}
\bysame, \emph{The {C}hristoffel-{D}arboux kernel}, Perspectives in partial
  differential equations, harmonic analysis and applications, Proc. Sympos.
  Pure Math., vol.~79, Amer. Math. Soc., Providence, RI, 2008, pp.~295--335.

\bibitem{Simon2009}
\bysame, \emph{Weak convergence of {CD} kernels and applications}, Duke Math.
  J. \textbf{146} (2009), no.~2, 305--330.

\bibitem{Totik2000}
V.~Totik, \emph{Asymptotics for {C}hristoffel functions for general measures on
  the real line}, J. Anal. Math. \textbf{81} (2000), 283--303.

\bibitem{Totik2009}
\bysame, \emph{Universality and fine zero spacing on general sets}, Ark. Mat.
  \textbf{47} (2009), no.~2, 361--391.

\bibitem{Totik2016}
\bysame, \emph{Universality under {S}zeg{\H{o}}'s condition}, Canad. Math.
  Bull. \textbf{59} (2016), no.~1, 211--224.

\bibitem{VanAssche1993}
W.~Van~Assche, \emph{Christoffel functions and {T}ur\'an determinants on
  several intervals}, J. Comput. Appl. Math. \textbf{48} (1993), 207--223.

\bibitem{VanAssche1999}
\bysame, \emph{Multiple orthogonal polynomials, irrationality and
  transcendence}, Continued fractions: from analytic number theory to
  constructive approximation ({C}olumbia, {MO}, 1998), Contemp. Math., vol.
  236, Amer. Math. Soc., Providence, RI, 1999, pp.~325--342.

\bibitem{VanAssche2006}
\bysame, \emph{Pad\'{e} and {H}ermite-{P}ad\'{e} approximation and
  orthogonality}, Surv. Approx. Theory \textbf{2} (2006), 61--91.

\bibitem{VanAssche2011}
\bysame, \emph{Nearest neighbor recurrence relations for multiple orthogonal
  polynomials}, J. Approx. Theory \textbf{163} (2011), no.~10, 1427--1448.

\bibitem{WVAVuer}
W.~Van~Assche and A.~Vuerinckx, \emph{Multiple {H}ermite polynomials and
  simultaneous {G}aussian quadrature}, Electron. Trans. Numer. Anal.
  \textbf{50} (2018), 182--198.

\end{thebibliography}
\end{bibliography}
\end{document}